\newtheorem{theorem}{Theorem}
\newtheorem{conjecture}{Conjecture}
\newtheorem{lemma}{Lemma}
\newtheorem{claim}{Claim}
\newtheorem{corollary}{Corollary}
\tikzstyle{vertex}=[circle, draw, inner sep=2pt, minimum size=6pt]
\tikzstyle{filledvertex}=[circle, draw, fill, inner sep=2pt, minimum size=6pt]
\tikzstyle{directed}=[postaction={decorate,
\begin{document}
\begin{spacing}{1.15}

\title{On the number of vertex-disjoint cycles in digraphs
\thanks{Supported by NSFC (Nos. 11601430 and 11671320) and China Postdoctoral Science Foundation (No. 2016M590969).}}

\author{\quad Yandong Bai $^{a,}$\thanks{Corresponding author. E-mail addresses: bai@nwpu.edu.cn (Y. Bai),  yannis@lri.fr (Y. Manoussakis).},
\quad Yannis Manoussakis $^{b}$\\[2mm]
\small $^{a}$ Department of Applied Mathematics, Northwestern Polytechnical University, \\
\small Xi'an 710129, China\\
\small $^{b}$ Laboratoire de Recherche en Informatique, Universit\'{e} de Paris-Sud, \\
\small Orsay cedex 91405, France}
\date{\today}
\maketitle
\begin{abstract}
Let $k$ be a positive integer.
Bermond and Thomassen conjectured in 1981 that every digraph with minimum outdegree at least $2k-1$ contains $k$ vertex-disjoint cycles.
It is famous as one of the one hundred unsolved problems selected in [Bondy, Murty, Graph Theory, Springer-Verlag London, 2008].
Lichiardopol, Por and Sereni proved in [SIAM J. Discrete Math. 23 (2) (2009) 979-992] that the above conjecture holds for $k=3$.

Let $g$ be the girth, i.e., the length of the shortest cycle, of a given digraph.
Bang-Jensen, Bessy and Thomass\'{e} conjectured in [J. Graph Theory 75 (3) (2014) 284-302] that
every digraph with girth $g$ and minimum outdegree at least $\frac{g}{g-1}k$ contains $k$ vertex-disjoint cycles.
Thomass\'{e} conjectured around 2005 that
every oriented graph (a digraph without 2-cycles) with girth $g$ and minimum outdegree at least $h$ contains a path of length $h(g-1)$,
where $h$ is a positive integer.

In this note,
we first present a new shorter proof of the Bermond-Thomassen conjecture for the case of $k=3$,
and then we disprove the conjecture proposed by Bang-Jensen, Bessy and Thomass\'{e}.
Finally, we disprove the even girth case of the conjecture proposed by Thomass\'{e}.
\medskip

\noindent {\bf Keywords:} Bermond-Thomassen conjecture; vertex-disjoint cycles; longest path; minimum outdegree; girth
\smallskip
\end{abstract}

\section{Introduction}

Throughout this note,
a cycle (path) in a digraph always means a {\em directed} cycle (path).
We use Bang-Jensen and Gutin \cite{BG2008} for terminology and notation not defined here.
Only finite and simple digraphs are considered.

The problem of finding vertex-disjoint cycles in digraphs has received extensive study in the past decades,
one can see, e.g., \cite{Alon1996,BK2011,BKM2014,BT1981,GT2011,HY2012,Lichiardopol2014,RRST1996,Thomassen1983,Thomassen1987}.
For a positive integer $k$,
denote by $f(k)$ the minimum integer such that every digraph
with minimum outdegree at least $f(k)$
contains $k$ vertex-disjoint cycles.
In view of the complete symmetric digraph on $2k-1$ vertices,
we have $f(k)\geq 2k-1$.
In 1981, Bermond and Thomassen \cite{BT1981} conjectured that the equality holds.

\begin{conjecture}[Bermond and Thomassen \cite{BT1981}]\label{conjecture: bermond-thomassen}
$f(k)=2k-1$.
\end{conjecture}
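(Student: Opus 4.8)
The plan is to prove the two inequalities $f(k)\ge 2k-1$ and $f(k)\le 2k-1$ separately, the first being routine and the second carrying the whole content of the conjecture. For the lower bound, the complete symmetric digraph on $2k-1$ vertices has minimum outdegree $2k-2$, yet since every cycle meets at least two vertices it admits at most $k-1$ vertex-disjoint cycles; hence $f(k)\ge 2k-1$, as already noted above.

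For the upper bound I would induct on $k$. The case $k=1$ is immediate, since following out-arcs from any vertex until one repeats produces a cycle. Assume the bound for $k-1$ and let $D$ have minimum outdegree at least $2k-1$. The driving idea is to find a single cycle $C$ for which $D-V(C)$ still has minimum outdegree at least $2(k-1)-1=2k-3$; the inductive hypothesis then supplies $k-1$ vertex-disjoint cycles inside $D-V(C)$, and adjoining $C$ yields the required $k$. Because deleting a cycle of length $\ell$ lowers each surviving outdegree by at most $\ell$, the bookkeeping is exact whenever $D$ has a cycle of length at most $2$, and in that case the induction closes at once.

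The genuine difficulty, and the reason the conjecture is open for $k\ge 4$, is that $D$ may have large girth, so every cycle is long and deleting it can push the outdegree below $2k-3$ at many vertices simultaneously. To confront this I would fix a minimum counterexample $D$ and a shortest cycle $C$, of length $g$. Since $D-V(C)$ has no $k-1$ disjoint cycles, the inductive hypothesis forces some vertex $v\notin V(C)$ to have lost at least three out-neighbours to $V(C)$, i.e.\ $|N^+(v)\cap V(C)|\ge 3$. Three out-neighbours on a \emph{shortest} cycle are strongly constrained: two of them that are close together along $C$ yield a chord producing a cycle shorter than $g$, contradicting minimality, so the analysis would proceed by showing that any admissible placement either creates such a shortcut or lets $C$ be rerouted through $v$ to liberate a vertex of $C$ and restore the degree condition.

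The main obstacle is making this dichotomy uniform in $k$ and $g$ together. For small fixed $k$ one can enumerate the finitely many obstructive chord patterns on a shortest cycle; this is in essence the combinatorial core behind the settled cases $k\le 3$ and the equality $f(3)=5$. For arbitrary $k$, however, many vertices may send three or more arcs into $C$ at once, and no single local reroute repairs them all. I therefore expect the decisive step to be a global selection argument that chooses $C$ to minimize the overflow $\sum_{v\notin V(C)}\max\{0,\,|N^+(v)\cap V(C)|-2\}$, bounding the number of deficient vertices so that a simultaneous repair becomes possible; controlling this quantity against a large girth is exactly the point at which I anticipate the proof to be hardest and at which all current approaches stall.
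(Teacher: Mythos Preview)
The statement you are attempting to prove is Conjecture~\ref{conjecture: bermond-thomassen}, which is \emph{open} for $k\ge 4$; the paper does not prove it in general and makes no claim to. The paper's only progress on it is Theorem~\ref{thm: on 3 disjoint cycles}, a new proof of the single case $k=3$.

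Your proposal is not a proof either, and you say so yourself. The lower bound $f(k)\ge 2k-1$ is correct and standard. For the upper bound, your induction on $k$ closes only when a $2$-cycle is available; once the girth is at least $3$, deleting any cycle may destroy more than two out-arcs at many vertices, and your own final paragraph concedes that the proposed ``global selection argument'' controlling the overflow quantity is precisely the point ``at which all current approaches stall.'' That is the whole conjecture: everything non-trivial lives in that unresolved step, so what you have written is a description of the difficulty, not a proof.

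It is also worth noting that the paper's argument for $k=3$ does not follow your inductive scheme at all. Rather than finding a short cycle whose removal preserves outdegree $\ge 2k-3$, it fixes a minimum counterexample $D$ (necessarily $5$-regular on the out-side), invokes Lemma~\ref{lemma} together with structural claims (no $2$-cycle, no triangle, every arc is dominated, every in-neighbourhood contains a cycle, no out-neighbourhood contains a cycle), and then builds an auxiliary $4$-regular subdigraph $D'$ whose $4$-cycle structure is analysed to reach a contradiction. None of this generalises in any obvious way to larger $k$, which is consistent with the conjecture remaining open.
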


The Bermond-Thomassen conjecture is famous as one of the one hundred unsolved problems selected in Bondy and Murty \cite{BM2008}.
It is trivially true for $k=1$ and Thomassen \cite {Thomassen1983} proved it for $k=2$ in 1983.
Lichiardopol et al. \cite{LPS2009} made a major step by showing that it is true for $k=3$ in 2009.
For the general value $f(k)$,
the best known bound $f(k)\leq 64k$ was obtained by Alon \cite{Alon1996} via a probabilistic argument in 1996.
Besides, Bessy et al. \cite {BLS2010} in 2010,
Bang-Jensen et al. \cite{BBT2014} in 2014
and Bai et al. \cite{BLL2015} in 2015
verified it for regular tournaments, tournaments and bipartite tournaments, respectively.

In 2014,
Bang-Jensen et al. \cite{BBT2014} considered the existence of $k$ vertex-disjoint cycles
in terms of minimum outdegree and girth (length of the shortest cycle) of the digraph.
Let $f(k,g)$ be the minimum integer such that every digraph
with girth $g$ and minimum outdegree at least $f(k,g)$
contains $k$ vertex-disjoint cycles.
Clearly,
$$
f(k)=\max_{g}f(k,g)
$$
and thus $f(k,g)\leq f(k)$.
In view of the fact that the circular digraph with order $n=p(g-1)+1$
and minimum outdegree at least $p=\lfloor \frac{g}{g-1}k\rfloor$
contains no $k$ vertex-disjoint cycles,
Bang-Jensen et al. \cite{BBT2014} pointed out that $f(k,g)\geq \lceil \frac{g}{g-1}k\rceil$
and conjectured that the equality holds.

\begin{conjecture}[Bang-Jensen, Bessy and Thomass\'{e} \cite{BBT2014}]\label{conjecture: bang-jensen et al.}
$f(k,g)=\lceil \frac{g}{g-1}k\rceil$.
\end{conjecture}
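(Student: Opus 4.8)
The plan is to refute Conjecture~\ref{conjecture: bang-jensen et al.} by exhibiting pairs $(k,g)$ for which $f(k,g)\neq\lceil\tfrac{g}{g-1}k\rceil$, and there are two regimes: an immediate one at girth $2$ and a substantive one at girth at least $3$. For the immediate one, note that demanding the $k$-disjoint-cycle property only of digraphs of girth exactly $g$ can only lower the threshold, so $f(k,g)\le f(k)$ for every $g$; since $f(3)=5$ by Lichiardopol et al.\ \cite{LPS2009}, we get $f(3,2)\le 5<6=\lceil\tfrac{2}{2-1}\cdot 3\rceil$, contradicting the conjecture outright. More generally, the complete symmetric digraph on $2k-1$ vertices has girth $2$, minimum outdegree $2k-2$, and at most $k-1$ vertex-disjoint cycles, so $f(k,2)\ge 2k-1$, whence $f(k,2)=2k-1\neq 2k$ whenever $f(k)=2k-1$ is known (in particular for $k\le 3$); and trivially $f(1,g)=1\neq 2=\lceil\tfrac{g}{g-1}\rceil$ for every $g\ge 2$. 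Thus the statement is already false.

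For the substantive regime $g\ge 3$ I would first reopen the claimed lower bound $f(k,g)\ge\lceil\tfrac{g}{g-1}k\rceil$ and then determine $f(k,g)$ exactly in a fresh case. A circular digraph $C_n(1,\dots,p)$ of girth $g$ satisfies $(g-1)p<n$, and if it has fewer than $k$ disjoint cycles then $n<kg$ since every cycle has length at least $g$; hence $p\le\lceil\tfrac{kg-1}{g-1}\rceil-1$, so circular digraphs yield only $f(k,g)\ge\lceil\tfrac{kg-1}{g-1}\rceil$, which falls one short of the conjectured value exactly when $(g-1)\mid(k-1)$ --- for example $C_9(1,2,3,4)$, having $9=3\cdot 3$ vertices, does contain the three disjoint triangles $\{0,3,6\}$, $\{1,4,7\}$, $\{2,5,8\}$, so it does not witness $f(3,3)\ge 5$. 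The first fresh instance is $(k,g)=(3,3)$, where I would prove $f(3,3)=4$. The lower bound $f(3,3)\ge 4$ is given by $C_8(1,2,3)$ (girth $3$, minimum outdegree $3$, eight vertices, hence no three disjoint cycles). For $f(3,3)\le 4$ I would show that every digraph $D$ of girth at least $3$ with minimum outdegree at least $4$ contains three vertex-disjoint cycles: the absence of $2$-cycles gives $|N^+(v)|+|N^-(v)|\le n-1$ for each vertex $v$, so counting arcs yields $4n\le n(n-5)$, i.e.\ $n=|V(D)|\ge 9$, with $n=9$ forcing $D$ to be a regular tournament on nine vertices; it then suffices to check that every such tournament, and every digraph meeting the hypotheses on ten or more vertices, contains three vertex-disjoint cycles, which I would obtain by adapting the short proof of $f(3)=5$ given earlier in the note, using the girth hypothesis to remove the sporadic configurations that genuinely need outdegree $5$. (The same mechanism should give $f(k,3)=\tfrac{3k-1}{2}<\lceil\tfrac{3}{2}k\rceil$ for all odd $k$, but only $k=3$ looks tractable at present.)

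The main obstacle is exactly this upper bound. The girth-$2$ refutation is immediate, but excluding a regular tournament on nine vertices --- and the larger girth-$\ge 3$, outdegree-$4$ digraphs --- as an extremal example for $f(3,3)$ is a genuine extremal statement; the nine-vertex case asserts that every $4$-regular tournament partitions into three directed triangles, which I would attack through the cycle-factor criterion (the out--in bipartite incidence graph of a regular tournament is regular, hence has a perfect matching, giving a spanning collection of vertex-disjoint cycles) combined with the strong connectivity and vertex-pancyclicity of regular tournaments to force a factor consisting of three cycles. Once that is in hand, Conjecture~\ref{conjecture: bang-jensen et al.} fails both at girth $2$ and at girth $3$.
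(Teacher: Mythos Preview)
Your proposal attacks the conjecture from the opposite side to the paper, and the substantive part has a genuine gap. The paper refutes Conjecture~\ref{conjecture: bang-jensen et al.} by constructing, for every $g\ge 3$ and every $t\ge 1$, digraphs of girth $g$ and minimum outdegree exactly $\lceil\tfrac{g}{g-1}k\rceil$ that contain at most $k-t$ vertex-disjoint cycles (a bipartite blow-up of a circular digraph on $X=\{x_0,\dots,x_{n-1}\}$ with $n=(\tfrac{g}{2}-1)h+1$, where each $x_i$ dominates an independent set $Y_i$ of size $h$ and each $Y_i$ dominates $x_{i+1},\dots,x_{i+h}$). This shows $f(k,g)>\lceil\tfrac{g}{g-1}k\rceil$ for all sufficiently large $k$, and moreover that no additive constant repairs the formula. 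You, by contrast, try to show $f(k,g)<\lceil\tfrac{g}{g-1}k\rceil$ for particular pairs. Your girth-$2$ and $k=1$ observations are formally correct against the bald equality, but the paper (and presumably the original source) treats $g\ge 3$ and $k\ge 2$; those edge cases would be dismissed as outside the intended range.

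Your substantive claim is $f(3,3)=4$, and here the gap is concrete: you have no proof that every oriented graph with $\delta^+\ge 4$ contains three vertex-disjoint cycles. You yourself call this ``the main obstacle'' and a ``genuine extremal statement,'' and the plan---adapt the $f(3)=5$ argument and handle the $9$-vertex regular tournaments separately---is a sketch, not a proof. The $f(3)=5$ argument in this note leans repeatedly on outdegree~$5$ (e.g., Claim~\ref{claim: D'} uses $|N^+(u)|=5$ to force $D'$ to be $4$-regular; Lemma~\ref{lemma} needs outdegree~$3$ in $D\setminus C$, which outdegree~$4$ does not guarantee after deleting a triangle with three common in-dominators), so ``adapting'' it to $\delta^+=4$ is not routine. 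Your observation that the circular-digraph lower bound misses $\lceil\tfrac{g}{g-1}k\rceil$ when $(g-1)\mid(k-1)$ is correct and interesting, but it is about the sharpness of the \emph{lower} bound; it does not by itself give $f(3,3)\le 4$, and the paper's construction supplies alternative witnesses that push the lower bound strictly above the conjectured value anyway. In short: the paper's disproof is complete and goes the other way; yours hinges on an unproven upper bound.
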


An {\em oriented} graph is a digraph without 2-cycles.
Let $l(h,g)$ be the minimum integer such that
every oriented graph with girth $g$ and minimum outdegree at least $h$ contains a path of length $l(h,g)$.
Sullivan noted in \cite{Sullivan2006} that Thomass\'{e} made the following conjecture on $l(h,g)$.

\begin{conjecture}[Thomass\'{e}, see \cite{Sullivan2006}]\label{conj: path}
$l(h,g)\geq h(g-1)$.
\end{conjecture}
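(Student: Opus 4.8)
\medskip

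\noindent\emph{Proof proposal.} Since the abstract announces that the even girth case of this conjecture is \emph{false}, what I would actually establish is a disproof: an explicit family of oriented graphs witnessing $l(h,g)<h(g-1)$ for every even $g$ and every $h\ge 3$. (For $h\le 2$ the statement is easy or true; e.g.\ $l(1,g)=g-1$ by following out-arcs into a shortest cycle.) First I would pin down the target. The bound $h(g-1)$ is sharp in general: the circular digraph on $h(g-1)+1$ vertices with out-neighbourhoods $\{i+1,\dots,i+h\}$ has girth $g$, minimum outdegree $h$, and a Hamiltonian path of length exactly $h(g-1)$. So a counterexample must be ``path-poor'': every vertex still has $h$ out-arcs, but every directed path is short. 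A plain cyclic blow-up of a $g$-cycle does not help, as its longest path always meets or exceeds the conjectured value. The point is parity: if $g=2d$ and the vertex set is split into $d$ cyclically ordered layers $V_0,\dots,V_{d-1}$ with all arcs going from $V_i$ to $V_{i+1}$ (indices mod $d$), then every cycle has length a multiple of $d$, so forbidding $d$-cycles forces the girth up to $2d=g$ in a single jump.

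Then I would give the construction. Let $V_0$ have exactly $h+1$ vertices, and to each $x\in V_0$ attach a private ``channel'': layers $V_1^{(x)},\dots,V_{d-1}^{(x)}$ each of size $h$, with all arcs from $x$ to $V_1^{(x)}$, all arcs from $V_i^{(x)}$ to $V_{i+1}^{(x)}$ for $1\le i\le d-2$, and all arcs from $V_{d-1}^{(x)}$ to $V_0\setminus\{x\}$; set $V_i=\bigcup_x V_i^{(x)}$. Every vertex then has outdegree exactly $h$, and one checks there are no $2$-cycles, so this is an oriented graph. The round-trip relation on $V_0$ sends $x$ to $V_0\setminus\{x\}$, hence is irreflexive: there is no $d$-cycle. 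But for $x\ne x'$ both $x\to x'$ and $x'\to x$ are round-trips through disjoint channels, giving a genuine $2d$-cycle; so the girth is exactly $2d=g$. For the longest path: a directed path runs through the layers cyclically, and since $x\in V_0$ points only into $V_1^{(x)}$, the bag-vertices between two consecutive $V_0$-vertices of the path lie in one channel and number exactly $d-1$, with at most one further (partial) channel before the first $V_0$-vertex and one after the last. As $|V_0|=h+1$, the path meets $V_0$ at most $h+1$ times, hence has at most $(h+1)+(h+2)(d-1)=hd+2d-1$ vertices, i.e.\ length at most $hd+2d-2$. Since $hd+2d-2<h(2d-1)=h(g-1)$ exactly when $h\ge 3$, this disproves the conjecture for every even $g$ with $h\ge 3$. (For $d=2$ the construction is just a bipartite oriented graph with one side of size $h+1$.)

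The hard part will be making the girth land on $g$ rather than on $g/2$. If one instead shrank some layer to its outdegree-forced minimum $h$, the preceding layer would have to point onto all of it, instantly recreating a $d$-cycle and collapsing the girth to $g/2$; the choice of size $h+1$ together with the ``$\to V_0\setminus\{x\}$'' routing is exactly what keeps the round-trip relation fixed-point-free while still admitting a two-lap cycle. I would also have to confirm that the bound $hd+2d-2$ on the longest path is actually attained (it is, for $h\ge 2$: splice a leading full channel onto a path visiting all $h+1$ vertices of $V_0$ and one trailing full channel), so that the strict inequality with $h(g-1)$ is real, and note that the parity trick has no odd analogue — for odd $g$ the layer count must be $g$ itself, pushing the longest path back up to roughly $hg$, above $h(g-1)$, consistent with the conjecture surviving in the odd case.
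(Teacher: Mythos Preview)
Your disproof is correct: the $d$-layered channel construction (with $d=g/2$) has girth exactly $g$, minimum outdegree $h$, and no $2$-cycles, and your path-length bound $hd+2d-2$ does fall below $h(g-1)$ for every $h\ge 3$. The paper, however, takes a different and much shorter route. It builds no new example at all; it simply reuses the bipartite digraph $D=(X,Y;A)$ already constructed in the proof of Theorem~\ref{thm: at most k-t cycles}, whose girth $g$ and minimum outdegree $h$ have already been verified there, and observes that since $|X|=n=(g/2-1)h+1$ the longest path has length at most $2n-1=h(g-2)+1<h(g-1)$. So the paper's argument is essentially free once Theorem~\ref{thm: at most k-t cycles} is in hand, and it records the clean upper bound $l(h,g)\le h(g-2)+1$. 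Your construction costs more but buys a genuinely sharper estimate for $g\ge 6$: the bound $l(h,g)\le hg/2+g-2$ is roughly half of $h(g-2)$ for large $g$, so the conjecture is seen to fail by a wider margin. Your girth argument (irreflexivity of the round-trip map on $V_0$) is also more self-contained than the circulant shortest-path computation the paper relies on. For $g=4$ the two constructions happen to coincide, and both agree with the bipartite tournament $D^{*}$ of Corollary~\ref{corollary: girth is 4}.
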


Sullivan also noted in \cite{Sullivan2006} that if Conjecture \ref{conj: path} is true
then it would imply the famous Caccetta-H\"{a}ggkvist conjecture,
which states that the minimum outdegree $h\leq \frac{n-1}{g-1}$ for any $n$-vertex oriented graph with girth $g$.
A simple proof can be found as follows.
Note that any path in an $n$-vertex oriented graph has length at most $n-1$.
If Conjecture \ref{conj: path} is true,
then there exists a path of length $h(g-1)\leq n-1$.
However, if Caccetta-H\"{a}ggkvist conjecture fails to hold, then $h>\frac{n-1}{g-1}$ and $h(g-1)> n-1$, a contradiction.

In this note,
we first present a new shorter proof of Conjecture \ref{conjecture: bermond-thomassen} for the case of $k=3$,
and then we disprove Conjecture \ref{conjecture: bang-jensen et al.} and the even girth case of Conjecture \ref{conj: path} by constructing a family of digraphs as counterexamples.

\section{Main results}

We first offer a new shorter proof of the Bermond-Thomassen conjecture for the case of $k=3$.
Note that the original proof of the corresponding result uses 10 pages in the published article \cite{LPS2009}.

\begin{theorem}\label{thm: on 3 disjoint cycles}
Every digraph with minimum outdegree at least 5 contains 3 vertex-disjoint cycles.
\end{theorem}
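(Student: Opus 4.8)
The plan is to argue by contradiction. Suppose the statement fails, and among all digraphs with minimum outdegree at least $5$ containing no three vertex-disjoint cycles, let $D$ be one with the fewest vertices and, subject to that, the fewest arcs. Then every vertex of $D$ has outdegree exactly $5$: deleting one out-arc at a vertex of outdegree $\ge 6$ keeps the minimum outdegree at least $5$ and cannot create cycles, producing a counterexample with the same number of vertices and fewer arcs, a contradiction. Likewise $D$ is strongly connected: a terminal strong component $D'$ of $D$ has no arc leaving it, hence still has minimum outdegree at least $5$, and if $D'\ne D$ then $D'$ is either a smaller counterexample or already contains three vertex-disjoint cycles, both impossible. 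Throughout I would freely use Thomassen's theorem that $f(2)=3$, i.e. every digraph with minimum outdegree at least $3$ contains two vertex-disjoint cycles.

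Next I would bring in a shortest cycle $C$, of length $g$ equal to the girth of $D$. The case $g=2$ is immediate: then $|V(C)|=2$, so each vertex of $D-V(C)$ loses at most two out-arcs and $D-V(C)$ has minimum outdegree at least $3$; by $f(2)=3$ it contains two vertex-disjoint cycles, which together with $C$ give three vertex-disjoint cycles in $D$, a contradiction. So we may assume $g\ge 3$. The basic tool here is the \emph{no short handle} principle: if $P$ is a path from $a$ to $b$ with $a,b\in V(C)$ (possibly $a=b$) that is internally disjoint from $C$, then $P$ is at least as long as the segment of $C$ from $a$ to $b$, since otherwise $P$ together with the complementary segment of $C$ would form a cycle shorter than $C$. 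In particular, any cycle meeting $C$ in exactly one vertex has length at least $g$, and every detour around a short piece of $C$ is long; this already constrains, for a vertex $v\notin V(C)$, the cyclic positions on $C$ of its out-neighbours and their spacing.

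The heart of the argument is the case $g\ge 3$, which I expect to be the main obstacle. The idea is to try to find two vertex-disjoint cycles in $D-V(C)$, giving three together with $C$. This succeeds unless certain vertices of $D-V(C)$ have fewer than three out-neighbours there, i.e. have at least three out-neighbours on $C$; call such a vertex \emph{$C$-heavy}. One then analyses how the $C$-heavy vertices sit relative to $C$: using the no-short-handle principle and the girth bound, one reroutes arcs through $C$ so as to produce either a cycle shorter than $C$ (contradicting the choice of $C$) or three vertex-disjoint cycles directly — one assembled from a segment of $C$ together with one or two detours through $C$-heavy vertices, and the remaining one or two extracted from the leftover subdigraph via repeated appeals to $f(2)=3$. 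Organising this rerouting into a finite case analysis on the number and cyclic placement on $C$ of the out-neighbours of the $C$-heavy vertices, while using strong connectivity and the fact that outdegrees are exactly $5$ to keep the number of configurations bounded, is where the real work lies.
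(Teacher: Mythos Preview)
Your outline handles the trivial reductions (every outdegree is exactly $5$, no $2$-cycle) but then stops precisely where the proof begins: you yourself write that organising the case analysis on $C$-heavy vertices ``is where the real work lies,'' and you supply none of it. That is the gap. A shortest cycle $C$ of length $g\ge 3$ may have many vertices outside with three, four, or even five out-neighbours on $C$; the ``no short handle'' principle constrains detours through $C$ but says nothing about how the $C$-heavy vertices interact with one another, and it is this interaction that has to produce the contradiction. The appeal to $f(2)=3$ is also too blunt: once several $C$-heavy vertices exist, $D-V(C)$ can have arbitrarily many vertices of outdegree $\le 2$, so Thomassen's theorem alone does not apply. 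The sharper lemma actually used here allows up to two vertices of outdegree $2$, and even that is only enough to reduce the number of $C$-heavy vertices to three, not to finish.

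The paper's argument proceeds quite differently from your sketch. After the minimal-counterexample reductions it shows that every arc of $D$ is dominated by some vertex, hence the in-neighbourhood of every vertex contains a cycle, and that $D$ is $3$-connected (not merely strongly connected). The triangle case is then dispatched via Menger's theorem: three vertices dominating a triangle together with three vertex-disjoint paths back to the triangle give three vertex-disjoint cycles directly. With $g\ge 4$ secured, the key construction is an auxiliary spanning subdigraph $D'$ consisting of those arcs $uv$ for which $u$ lies on an induced cycle inside $N^{-}(v)$; one proves $D'$ is $4$-regular, that it cannot contain a $4$-cycle, and finally that it must contain a $4$-cycle. None of this is a ``case analysis on cyclic placement of out-neighbours on $C$''; that description matches the original ten-page argument of Lichiardopol, P\'or and Sereni, and the whole point of the present proof is to replace that analysis with the $D'$ structure.
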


It is worth remarking that our proof uses some ideas appeared in \cite{LPS2009},
e.g., the ideas we used while considering the case that the minimum counterexample contains no triangle.
In fact, the main contribution of our proof is that we make the analysis part of the case that `the minimum counterexample contains a triangle' significantly shorter,
and the key purpose of this work is to motivate new ideas for checking Conjecture \ref{conjecture: bermond-thomassen} for higher values of $k$ in the coming future.

The second part of this note is devoted to Conjectures \ref{conjecture: bang-jensen et al.} and \ref{conj: path}.
We disprove Conjecture \ref{conjecture: bang-jensen et al.} by showing the following results.

\begin{theorem}\label{thm: at most k-t cycles}
For any integers $g\geq 3$ and $t\geq 1$,
there exists a digraph with girth $g$ and minimum outdegree at least $\lceil \frac{g}{g-1}k\rceil$
but contains at most $k-t$ vertex-disjoint cycles under the condition that $k\geq (t+1)(g-1)$ if $g$ is even and $k\geq (t+2)(g+1)$ if $g$ is odd.
\end{theorem}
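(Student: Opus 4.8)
The plan is to build the counterexample by gluing together many copies of the tight extremal configuration for a single "long" cycle. Recall that the circular digraph $C(n,p)$ on $n = p(g-1)+1$ vertices, in which each vertex $i$ sends arcs to $i+1, i+2, \dots, i+p$ (indices mod $n$), has girth $g$, minimum outdegree exactly $p = \lfloor \frac{g}{g-1}k \rfloor$, and — crucially — every cycle in it uses at least $g-1$ "forward units" per step while the total number of vertices is only $p(g-1)+1$, so it contains no two vertex-disjoint cycles once $p$ is large enough; in fact the standard counting shows it contains at most $\lfloor n/g \rfloor$ or so disjoint cycles, far fewer than $k$. First I would fix the exact value $p=\lceil\frac{g}{g-1}k\rceil$ demanded by the theorem and check that the circular digraph on $n=p(g-1)+1$ vertices still realizes girth $g$ and outdegree $\geq p$; the arithmetic conditions $k\ge (t+1)(g-1)$ (even $g$) and $k\ge (t+2)(g+1)$ (odd $g$) should be exactly what is needed to make the ceiling behave and to guarantee that a single block already "wastes" enough outdegree to suppress cycles.

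Next I would form the digraph $D$ as a disjoint-ish union of $m$ copies $B_1, \dots, B_m$ of this block, arranged so that the whole thing still has girth $g$ and minimum outdegree at least $p$, but so that the number of vertex-disjoint cycles one can pack is controlled. The cleanest way is to make the copies genuinely vertex-disjoint (so $D$ is their disjoint union): then girth and minimum outdegree are inherited, and the maximum number of vertex-disjoint cycles in $D$ is $m$ times the maximum number in one block. So it suffices to choose one block that by itself has minimum outdegree $\ge \lceil\frac{g}{g-1}k\rceil$, girth $g$, and admits no $k$ disjoint cycles — but wait, that is just Conjecture 2 being false for a single block, which is too strong. The real mechanism must instead be: take $D$ to be a \emph{single} circular digraph $C(n,p)$ with $n$ chosen slightly larger, $n = k(g-1) + r$ for a controlled remainder, and show directly by a counting/ potential argument that it has at most $k-t$ disjoint cycles. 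Each cycle of length $\ell$ in $C(n,p)$ satisfies $\ell \ge g$ and, more importantly, consumes at least $(g-1)\cdot(\text{something})$ from a budget of size $n$; summing over $t'$ disjoint cycles of total length $L \le n$ and using $\ell_i \ge g$ gives $t' \le n/g$, but to beat $k$ we need the sharper fact that in $C(n,p)$ the "index advance" around any cycle is a positive multiple of $n$ while each arc advances by at most $p$, forcing $\ell_i \ge n/p \ge (g-1) - O(1)$ \emph{and} the lengths to sum to at most $n$, yielding at most $n/\ell_{\min} \le p \cdot n / n = p$-ish but actually $\le \lfloor n/(g-1)\rfloor$ disjoint cycles; tuning $n$ between $k(g-1)$ and $(k+1)(g-1)$ and folding in the ceiling's excess makes this at most $k - t$ exactly when the stated inequalities on $k$ hold.

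The main obstacle is this last counting step: proving a clean upper bound on the number of vertex-disjoint cycles in the circular digraph $C(n,p)$ with $p = \lceil \frac{g}{g-1} k \rceil$, tight enough to read off "$\le k-t$". The subtlety is that larger $p$ allows \emph{shorter} cycles (length as small as $\lceil n/p \rceil$), which pushes the packing number \emph{up}, so one cannot simply take $n$ huge; the value of $n$ must be chosen delicately so that the forced cycle length (roughly $g-1$, since $n/p \approx g-1$) together with $\sum \ell_i \le n$ caps the count, and then the odd-versus-even distinction in $g$ — which changes whether $\frac{g}{g-1}k$ is typically an integer and how much the ceiling inflates $p$ — is precisely what produces the two different hypotheses $k\ge (t+1)(g-1)$ and $k\ge (t+2)(g+1)$. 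I would first nail the inequality for even $g$ (where $\frac{g}{g-1}k$ and the arithmetic are cleaner), then handle odd $g$ by absorbing the rounding loss into the extra slack $(t+2)(g+1)$ versus $(t+1)(g-1)$.
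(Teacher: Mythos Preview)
Your proposal has a genuine gap: the circular digraph $C(n,p)$ with $p=\lceil \frac{g}{g-1}k\rceil$ and $n=p(g-1)+1$ does \emph{not} in general have fewer than $k$ vertex-disjoint cycles, so it cannot serve as the counterexample. The trouble is the direction of the ceiling. With the floor $p=\lfloor \frac{g}{g-1}k\rfloor$ one has $p(g-1)\le gk-1$, hence $n\le gk$ and the bound ``at most $\lfloor n/g\rfloor$ disjoint cycles'' gives at most $k-1$; but with the ceiling one has $p(g-1)\ge gk$, hence $n\ge gk+1$, and the same bound only yields $\le k$, which is useless. Worse, the bound is actually attained: for $g=4$, $t=1$, $k=6$ (so $k\ge (t+1)(g-1)$ holds) one gets $p=8$, $n=25$, and $C(25,8)$ contains six pairwise disjoint $4$-cycles, e.g.
\[
(0,8,16,24),\ (1,9,17,18),\ (2,10,11,19),\ (3,4,12,20),\ (5,13,21,22),\ (6,14,15,23),
\]
each with step pattern a rotation of $(8,8,8,1)$. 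Taking $n$ larger (your ``$n=k(g-1)+r$'') only makes things worse: more vertices means room for more disjoint cycles, while the minimum cycle length $\lceil n/p\rceil$ is still about $g$. Your two proposed counting arguments collapse to the same inequality $\#\text{cycles}\le n/g$, and no choice of $n$ simultaneously gives girth $g$, outdegree $\ge\lceil\frac{g}{g-1}k\rceil$, and $n<g(k-t+1)$.

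The paper's construction avoids this by \emph{decoupling outdegree from the bottleneck}. It builds a bipartite digraph on $X\cup Y$ where $X=\{x_0,\dots,x_{n-1}\}$ is a small ``circular backbone'' of size only $n=(\tfrac{g}{2}-1)h+1$ (roughly half of what you were taking), and each $x_i$ dominates a private independent set $Y_i$ of $h$ vertices, which in turn dominate $x_{i+1},\dots,x_{i+h}$. Every vertex then has outdegree exactly $h$, the girth is $g$, and every cycle must use at least $g/2$ vertices of $X$; since $|X|\le \tfrac{g}{2}\bigl(k+1-\tfrac{k}{g-1}\bigr)$, one reads off at most $k-t$ disjoint cycles when $k\ge(t+1)(g-1)$. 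For odd $g$ one adds a single chord $x'_{rh}x'_0$ to drop the girth from $g+1$ to $g$, which can raise the packing number by at most one and accounts for the weaker hypothesis $k\ge(t+2)(g+1)$. The missing idea in your proposal is precisely this bipartite blow-up: it lets you inflate the minimum outdegree via the large $Y$ side while keeping the cycle-counting witness $|X|$ small.
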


\begin{corollary}\label{corollary: no constant c}
For any integer $g\geq 3$,
there exists no positive constant (integer) $c$ such that $f(k,g)=\lceil \frac{g}{g-1}k\rceil+c$.
\end{corollary}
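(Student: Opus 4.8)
The plan is to derive Corollary \ref{corollary: no constant c} directly from Theorem \ref{thm: at most k-t cycles} by a simple asymptotic argument. Fix $g \geq 3$ and suppose, for contradiction, that there were a nonnegative constant integer $c$ with $f(k,g) = \lceil \frac{g}{g-1}k \rceil + c$ for all $k$. The strategy is to exhibit, for suitably large $k$, a digraph meeting the hypothesis of Theorem \ref{thm: at most k-t cycles} with a value of $t$ exceeding $c$, so that the digraph has girth $g$, minimum outdegree at least $\lceil \frac{g}{g-1}k \rceil$, yet fewer than $k - c$ vertex-disjoint cycles; this forces $f(k,g) \geq \lceil \frac{g}{g-1}k \rceil + (c+1)$, contradicting the assumed equality.

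First I would set $t = c + 1$ (so $t \geq 1$). Then I invoke Theorem \ref{thm: at most k-t cycles} with this $t$: provided $k$ satisfies the theorem's side condition, namely $k \geq (t+1)(g-1) = (c+2)(g-1)$ when $g$ is even, or $k \geq (t+2)(g+1) = (c+3)(g+1)$ when $g$ is odd, there exists a digraph $D$ with girth $g$ and minimum outdegree at least $\lceil \frac{g}{g-1}k \rceil$ containing at most $k - t = k - c - 1$ vertex-disjoint cycles. Since such a $k$ certainly exists (the side condition is a fixed lower bound once $c$ and $g$ are fixed), $D$ is a genuine obstruction: by definition of $f(k,g)$, a digraph with minimum outdegree at least $f(k,g)$ must contain $k$ vertex-disjoint cycles, so $D$'s outdegree is below $f(k,g)$, i.e. $\lceil \frac{g}{g-1}k \rceil \leq f(k,g) - 1$, whence $f(k,g) \geq \lceil \frac{g}{g-1}k \rceil + 1 > \lceil \frac{g}{g-1}k \rceil + c$, the desired contradiction. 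Actually, to be cleaner I would phrase the contradiction through the count of cycles: if $f(k,g) = \lceil \frac{g}{g-1}k \rceil + c$ then $D$, having outdegree at least $\lceil \frac{g}{g-1}k \rceil$ which is less than $f(k,g)$ only when $c \geq 1$ — so I should instead note that the right comparison uses $t = c+1$ to guarantee $k - t < k$, making the bound on disjoint cycles strictly smaller than what outdegree $f(k,g)$ would guarantee, and chase the inequality from there.

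The argument is essentially bookkeeping, so there is no serious obstacle; the only point requiring a moment's care is handling $c = 0$ versus $c \geq 1$ uniformly and making sure the chosen $t$ yields a strict violation of the purported formula for every large $k$, not just infinitely many. Writing $t = c + 1$ handles this cleanly in both parities of $g$. I would also remark explicitly that the corollary says more than ``Conjecture \ref{conjecture: bang-jensen et al.} is false'': it rules out any constant additive correction, so the true growth of $f(k,g) - \lceil \frac{g}{g-1}k \rceil$ is unbounded in $k$, which one can in fact read off quantitatively from the side condition in Theorem \ref{thm: at most k-t cycles} by solving for the largest admissible $t$ as a function of $k$.
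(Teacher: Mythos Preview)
Your plan to deduce the corollary as a black-box consequence of Theorem \ref{thm: at most k-t cycles} is viable and differs from the paper's route: the paper repeats the construction of Theorem \ref{thm: at most k-t cycles} with the larger outdegree $h=\lceil\frac{g}{g-1}k\rceil+c$ and checks directly that the resulting digraph still has at most $k-1$ vertex-disjoint cycles once $k>\frac{(g-1)(gc+g-2c)}{g}$, so that $f(k,g)>\lceil\frac{g}{g-1}k\rceil+c$ outright. Your approach avoids redoing the construction, which is a genuine economy.

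However, your central implication is not correct as written. You assert that a digraph $D$ with girth $g$, minimum outdegree at least $\lceil\frac{g}{g-1}k\rceil$, and fewer than $k-c$ vertex-disjoint cycles ``forces $f(k,g)\geq\lceil\frac{g}{g-1}k\rceil+(c+1)$''. From the definition of $f(\cdot,g)$ the only conclusion available is that the minimum outdegree of $D$ lies strictly below $f(k-c,g)$ (and, more weakly, below $f(k,g)$); knowing that $D$ has very few cycles gives no additional leverage on $f(k,g)$ itself, only on $f(k',g)$ for $k'\leq k-c$. Your displayed chain $f(k,g)\geq\lceil\frac{g}{g-1}k\rceil+1>\lceil\frac{g}{g-1}k\rceil+c$ fails for every $c\geq 1$, as you yourself notice, and the paragraph that follows gestures at a repair (``chase the inequality from there'') without carrying it out.

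The missing step is precisely the change of index you are reaching for: apply the assumed formula at $k'=k-c$ rather than at $k$. With $t=c+1$ and $k$ large enough for Theorem \ref{thm: at most k-t cycles}, the digraph $D$ has at most $k-c-1<k'$ vertex-disjoint cycles, so its minimum outdegree is strictly less than $f(k',g)=\lceil\frac{g}{g-1}(k-c)\rceil+c$. On the other hand that outdegree is at least $\lceil\frac{g}{g-1}k\rceil$, and the elementary bound $\lceil\frac{g}{g-1}k\rceil-\lceil\frac{g}{g-1}(k-c)\rceil\geq\lfloor\frac{gc}{g-1}\rfloor\geq c$ (valid for $c\geq 1$) gives $\lceil\frac{g}{g-1}k\rceil\geq f(k',g)$, a contradiction. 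With this one line inserted, your argument is complete and arguably tidier than the paper's.
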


For the case of $g=4$,
the conjectured result is $f(k,4)=\lceil 4k/3\rceil$,
we can also disprove Conjecture \ref{conjecture: bang-jensen et al.} by showing that $f(k,4)\geq 2k-1$.

\begin{corollary}\label{corollary: girth is 4}
For any integer $h\leq 2k-2$,
there exists a digraph with girth $4$ and minimum outdegree at least $h$ but contains no $k$ vertex-disjoint cycles.
\end{corollary}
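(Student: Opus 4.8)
\noindent\emph{Sketch of the intended argument.} The plan is to exhibit an explicit \emph{bipartite} digraph, with vertex set split into parts $A$ and $B$ and every arc running between them. Bipartiteness is the key device: a directed cycle of a bipartite digraph alternates between the parts, so it has even length and visits at least two vertices of $A$ (length two being impossible, since $2$-cycles are forbidden). Hence if $|A|=2k-1$, any family of pairwise vertex-disjoint cycles uses at least two vertices of $A$ per cycle, so it has at most $\lfloor (2k-1)/2\rfloor=k-1$ members. That is exactly what prevents $k$ disjoint cycles, and $|A|=2k-1$ is affordable because the only constraint it must meet is $|A|\ge h$, which holds as $h\le 2k-2$.

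For the construction I would take $A=\{a_1,\dots,a_{2k-1}\}$ and let $B$ contain, for every $(2k-1-h)$-element subset $S\subseteq A$, a total of $h$ vertices $b_{S,1},\dots,b_{S,h}$, each receiving an arc from every $a_i\in S$ and sending an arc to every $a_i\notin S$. (When $h\le 0$ the statement is degenerate and $\vec{C}_4$ suffices; so assume $1\le h\le 2k-2$, whence $k\ge 2$ and $1\le 2k-1-h\le 2k-2$.) No ordered pair then carries arcs in both directions, so there is no $2$-cycle, and bipartiteness rules out odd cycles; hence the girth is at least $4$. A $4$-cycle does occur: pick $i\ne i'$, a subset $S$ with $a_i\in S\not\ni a_{i'}$ and a subset $S'$ with $a_{i'}\in S'\not\ni a_i$ (possible precisely because $1\le 2k-1-h\le 2k-2$), and read off $a_i\to b_{S,1}\to a_{i'}\to b_{S',1}\to a_i$. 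So the girth equals $4$.

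For the outdegrees: each $b_{S,t}$ has out-neighbourhood $A\setminus S$, so $d^+(b_{S,t})=(2k-1)-(2k-1-h)=h$; and each $a_i$ lies in $\binom{2k-2}{2k-2-h}\ge 1$ of the subsets $S$, each contributing $h$ out-arcs from $a_i$, so $d^+(a_i)\ge h$. Thus the minimum outdegree is at least $h$, and combined with the girth-$4$ property and the counting bound of the first paragraph this completes the proof.

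The one point that genuinely needs care — and where a hasty construction fails — is reconciling a large minimum outdegree with the absence of $2$-cycles. When $h$ is close to $2k-2$, each $b$-vertex must point to almost all of $A$ and hence receives arcs from very few vertices of $A$; if $B$ carried just one vertex per subset (a plain bipartite tournament on $\,2k-1+\binom{2k-1}{h}\,$ vertices), some $a_i$ would collect far fewer than $h$ out-arcs. Taking $h$ copies of each subset repairs this without touching either the girth or the disjoint-cycle bound; much cheaper choices of $B$ — for instance $h$ copies of each of the $2k-1$ circular intervals of length $2k-1-h$ in $A$ — work equally well.
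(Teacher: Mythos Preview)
Your argument is correct and follows exactly the paper's strategy: exhibit a bipartite digraph whose small part has at most $2k-1$ vertices, so that any family of disjoint cycles, each meeting that part in at least two vertices, has at most $k-1$ members. The paper's concrete witness is simpler than yours---it takes $Y=\{y_1,\dots,y_{h+1}\}$, sets $X=X_1\cup\dots\cup X_{h+1}$ with each $|X_i|=h$, and lets $y_i\to X_i$ and $X_i\to Y\setminus\{y_i\}$, producing a bipartite \emph{tournament} of minimum outdegree exactly $h$ whose small side has only $h+1\le 2k-1$ vertices---but your construction (and your circular-interval variant) is equally valid.
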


\begin{proof}
Let $X=X_{1}\cup \cdots \cup X_{h+1}$ and $Y=\{y_{1},\ldots,y_{h+1}\}$,
where $X_{i}=\{x^{i}_{1},\ldots,x^{i}_{h}\}$ for each $1\leq i\leq h+1$ and they are disjoint.
For any bipartite digraph with bipartition $(X,Y)$,
since each cycle contains at least two vertices of $Y$ and $|Y|=h+1\leq 2k-1$,
there exist at most $k-1$ vertex-disjoint cycles.
Let $D^{*}$ be a bipartite digraph with bipartition $(X,Y)$ such that
$X_{i}$ dominates all vertices in  $Y\backslash \{y_{i}\}$ and $y_{i}$ dominates all vertices in $X_{i}$.
One can see that the minimum outdegree of $D^{*}$ is $h$ and the girth of $D^{*}$ is 4, which is a required digraph.
\end{proof}

\noindent
\textbf{Remark 1.}
Bai et al. showed in \cite{BLL2015} that every bipartite tournament (an orientation of a complete bipartite graph)
with minimum outdegree at least $2k-1$ contains $k$ vertex-disjoint cycles.
The digraph (bipartite tournament) $D^{*}$ constructed in the proof of Corollary \ref{corollary: girth is 4}
shows that the minimum outdegree $2k-1$ is best possible for Conjecture \ref{conjecture: bermond-thomassen}
even when we restrict the digraph class to bipartite tournaments.

As a final conclusion,
we disprove the even girth case $g\geq 4$ of Conjecture \ref{conj: path} and present an upper bound of $l(h,g)$ for even $g\geq 4$.
Here note that any oriented graph has girth at least 3.

\begin{theorem}\label{thm: even girth case}
Conjecture \ref{conj: path} does not hold for any even girth $g\geq 4$ and $l(h,g)\leq h(g-2)+1$ for even $g\geq 4$.
\end{theorem}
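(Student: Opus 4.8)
The plan is to leverage Theorem~\ref{thm: at most k-t cycles} directly. Conjecture~\ref{conj: path} asserts $l(h,g)\ge h(g-1)$ for every oriented graph with girth $g$ and minimum outdegree at least $h$. To disprove it for a fixed even $g\ge 4$, it suffices to produce, for each $h$, an oriented graph with girth $g$ and minimum outdegree at least $h$ whose longest path has length \emph{strictly less than} $h(g-1)$; in fact we will get length at most $h(g-2)+1$, which simultaneously yields the stated upper bound $l(h,g)\le h(g-2)+1$.

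First I would recall why the construction behind Theorem~\ref{thm: at most k-t cycles} is relevant here. When $g$ is even, the condition there is $k\ge (t+1)(g-1)$, and the digraph produced has girth $g$, minimum outdegree at least $\lceil\frac{g}{g-1}k\rceil$, yet at most $k-t$ vertex-disjoint cycles. The key point is that a digraph with only few vertex-disjoint cycles cannot contain a long path: a path of length $L$ in a digraph of girth $g$ forces many disjoint cycles once we know the path interacts with enough cyclic structure. More concretely, I would argue that in the counterexample digraph $D$ from Theorem~\ref{thm: at most k-t cycles} (or a suitable variant of it tailored to have outdegree exactly $h$), any path $P$ can be partitioned along its vertices into blocks, and each sufficiently long block of $P$ can be closed up into a cycle inside $D$ because of the circular/blow-up structure; hence a path of length $\ge h(g-1)$ would yield $\ge h+1$ (or at least more than the digraph admits) vertex-disjoint cycles, contradicting the cycle-packing bound. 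Setting the parameters $k$ and $t$ so that $\lceil\frac{g}{g-1}k\rceil = h$ and the resulting cycle bound $k-t$ is small relative to the number of cycles a length-$h(g-1)$ path would force gives the contradiction.

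For the quantitative bound $l(h,g)\le h(g-2)+1$, I would make the block argument explicit: take any oriented graph $G$ with girth $g\ge 4$ (even) and minimum outdegree $\ge h$, and consider a longest path $v_0v_1\cdots v_L$. Using that $v_L$ has $\ge h$ out-neighbours, all of which must lie on the path (else $P$ extends), and that girth $g$ forces each back-arc from $v_L$ to reach an index at most $L-(g-1)$, together with an averaging/pigeonhole over the $h$ out-neighbours of $v_L$ and an iteration of this reasoning along the path, one obtains $L\le h(g-2)+1$. The even-girth hypothesis enters to ensure the back-arcs from successive endpoints cannot be packed too tightly without creating a shorter cycle. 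Since $h(g-2)+1 < h(g-1)$ for $h\ge 1$, this both disproves Conjecture~\ref{conj: path} for even $g\ge 4$ and establishes the upper bound.

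The main obstacle I anticipate is the block/pigeonhole step: turning "long path" into "many vertex-disjoint cycles" cleanly enough to beat the threshold $h(g-1)$, while keeping the girth exactly $g$ and the minimum outdegree at exactly $h$ (not merely $\ge$ some larger value). One must be careful that the cycles extracted from disjoint blocks of the path are genuinely vertex-disjoint and genuinely have length $\ge g$, and that the count exceeds what the construction of Theorem~\ref{thm: at most k-t cycles} allows; handling the boundary/rounding in $\lceil\frac{g}{g-1}k\rceil$ and the edge effects at the two ends of the path is where the care is needed.
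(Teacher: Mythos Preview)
Your proposal has a genuine gap, and it also misses the paper's much simpler route.

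The paper's proof is essentially a two-line vertex count. It takes the bipartite digraph $D=(X,Y;A)$ built in the proof of Theorem~\ref{thm: at most k-t cycles}, which has girth $g$, minimum outdegree exactly $h=\lceil\frac{g}{g-1}k\rceil$, and $|X|=n=(\tfrac{g}{2}-1)h+1$. Since every directed path in $D$ alternates between $X$ and $Y$, it can use at most $n$ vertices of $X$ and hence has length at most $2n-1=h(g-2)+1<h(g-1)$ (using $h\ge 2$). That single observation simultaneously disproves Conjecture~\ref{conj: path} for even $g$ and gives the bound $l(h,g)\le h(g-2)+1$; no cycle-packing is invoked at all.

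Your part~1 (turn a hypothetical long path in $D$ into many vertex-disjoint cycles and contradict the bound $k-t$ from Theorem~\ref{thm: at most k-t cycles}) is aimed at the right digraph but is needlessly indirect, and the numerics are delicate: a path of length $h(g-1)$ would yield at most about $h(g-1)/g\approx k$ cycles, which only barely exceeds $k-t$, so the ``block'' extraction would have to be done with essentially no loss. Once the explicit bipartite structure of $D$ is in hand, the alternating-vertex argument above bounds the path length immediately and exactly.

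Your part~2, however, is wrong as stated. You propose to take \emph{any} oriented graph $G$ with girth $g\ge 4$ and minimum outdegree $\ge h$, consider a longest path $v_0v_1\cdots v_L$, and deduce $L\le h(g-2)+1$. This universal statement is false: there are oriented graphs with girth $g$, minimum outdegree at least $h$, and paths of arbitrary length. For instance, run the very same construction with a larger parameter $h'>h$; the resulting digraph still has girth exactly $g$ and minimum outdegree $h'\ge h$, yet it contains a path $x_0,y_0,x_1,y_1,\ldots$ of length $(g-2)h'+1>h(g-2)+1$. More fundamentally, establishing $l(h,g)\le M$ means exhibiting \emph{one} oriented graph with the given parameters whose longest path has length at most $M$; it neither requires nor can follow from a bound on the longest path in \emph{every} such graph. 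Your back-arc/pigeonhole sketch is aimed at the wrong quantifier, so as written the proposal does not yield the claimed bound $l(h,g)\le h(g-2)+1$.
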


We present the proofs of Theorems \ref{thm: on 3 disjoint cycles}, \ref{thm: at most k-t cycles}, \ref{thm: even girth case} and Corollary \ref{corollary: no constant c} in the remaining sections.
Here we give some necessary definitions and notations.
For an arc $uv$ of a digraph $D$,
we write $u\rightarrow v$ and say $u$ {\em dominates} $v$ (or $v$ is {\em dominated} by $u$).
An arc $uv$ is {\em dominated} by a vertex $w$ if both $u$ and $v$ are dominated by $w$,
we also say that $w$ is a {\em dominating vertex} of the arc $uv$.
For two vertex-disjoint subsets $M$ and $N$ of $V(D)$,
we write $M\rightarrow N$ if each vertex of $M$ dominates all vertex of $N$.
For a subset $S$ of $V(D)$,
we sometimes use $S$ to denote the subdigraph of $D$ induced by the subset $S$,
and we use $N^{+}_{S}(M)$ (resp. $N^{-}_{S}(M)$) to denote the set of outneighbors (resp. inneighbors) of the vertices of $M$ in $S$.
For convenience, we write $v\rightarrow M$ for $\{v\}\rightarrow M$,
$M\rightarrow v$ for $M\rightarrow \{v\}$,
$N^{+}_{S}(v)$ for $N^{+}_{S}(\{v\})$
and $N^{-}_{S}(v)$ for $N^{-}_{S}(\{v\})$.
A digraph $D$ is {\em strongly connected} if
there is a directed path from $u$ to $v$ for any two vertices $u$ and $v$ of $D$.
We say that $D$ is {\em $k$-connected} if the removal of any set of fewer than $k$
vertices results in a strongly connected digraph.
The {\em strong connectivity} of a digraph $D$ is the minimum integer $k$ such that $D$ is $k$-connected.

\section{Proof of Theorem \ref{thm: on 3 disjoint cycles}}

Let $D$ be an $n$-vertex digraph with minimum outdegree at least 5.
Note that $n\geq 6$.
We proceed by induction on $n$.
If $n=6$,
then $D$ is a complete symmetric digraph
and the result holds by the observation that $D$ contains 3 vertex-disjoint 2-cycles.
Now assume that $n\geq 7$ and the statement holds for each digraph with order at most $n-1$.
Assume the opposite that there exist counterexamples
and we can let $D$ be a minimum counterexample,
which has minimum number of vertices and, subject to this, has minimum number of arcs.
It follows that each vertex of $D$ has outdegree exactly 5.

The following lemma will play an important role in our proof.

\begin{lemma}[\cite{LPS2009}] \label{lemma}
Let $D$ be a digraph such that each vertex has outdegree at least 3,
except at most two vertices which may have outdegree 2.
Then $D$ contains 2 vertex-disjoint cycles.
\end{lemma}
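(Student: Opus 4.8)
The plan is to run an induction on $|V(D)|$ through a minimum counterexample: let $D$ be a counterexample with as few vertices as possible and, subject to that, as few arcs as possible. First I would note that the degree hypothesis forces $|V(D)|\ge 4$, because on three vertices one cannot have every outdegree at least $2$ while keeping at most two exceptional vertices; the small cases $|V(D)|\in\{4,5\}$ I would then dispatch by direct inspection, where one always finds two vertex-disjoint $2$-cycles. By minimality of the number of arcs I may repeatedly delete an out-arc from any vertex of outdegree at least $4$, so from now on every vertex of $D$ has outdegree exactly $3$ except for at most two vertices of outdegree exactly $2$, which I call \emph{exceptional}.

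Next I would reduce to the case that $D$ is strongly connected. If it is not, choose a terminal strong component $D_1$. Since no arc leaves $D_1$, each vertex of $D_1$ retains all of its out-neighbours, so $D_1$ again satisfies the degree hypothesis; as $4\le|V(D_1)|<|V(D)|$, the induction hypothesis supplies two vertex-disjoint cycles in $D_1$, hence in $D$, contradicting the choice of $D$.

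Now let $C=v_0v_1\cdots v_{k-1}v_0$ be a shortest cycle of $D$. A standard argument shows $C$ is chordless, since a chord would close a cycle shorter than $C$; hence the only out-neighbour of $v_i$ on $C$ is $v_{i+1}$, so $v_i$ has at least one out-neighbour — at least two if $v_i$ is not exceptional — in $D':=D-V(C)$, and in particular $D'\ne\emptyset$. If $D'$ contained a cycle we would be done at once, so $D'$ is acyclic, and I would aim to reach a contradiction by producing a cycle shorter than $C$. Exploiting that $D$ is strongly connected and $D'$ is acyclic, one locates a sink $w$ of $D'$ — so every out-neighbour of $w$ lies on $C$ — together with a path $v_c\to p_1\to\cdots\to w$ whose interior $p_1,\dots,w$ lies in $D'$; closing up along $C$ through an out-neighbour $v_j$ of $w$ gives a closed walk in which no vertex repeats (the interior is in $D'$, the rest on $C$), that is, a genuine cycle. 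Choosing this configuration so that the $D'$-portion is as short as possible, and using that the non-exceptional sink $w$ has three distinct out-neighbours on $C$, which cannot all occupy the handful of positions immediately after $v_c$, I would force this cycle to be strictly shorter than $C$, the desired contradiction.

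The hard part is precisely this last step: the length bookkeeping must be tight enough to make the new cycle \emph{strictly} shorter than $C$ rather than merely of the same length, and this is where the ``at most two exceptional vertices'' hypothesis must really be used, since a sink of outdegree $2$ offers only two out-neighbours on $C$ and the pigeonhole argument above collapses. I expect to deal with this either by re-selecting $w$ (there are many sinks of $D'$ and only two exceptional vertices), or by isolating a few residual configurations — $k$ small, $D$ having very few $2$-cycles, or every path into $D'$ forced through an exceptional vertex — and in each producing two disjoint short cycles directly.
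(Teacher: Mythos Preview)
The paper does not prove this lemma at all; it is quoted from \cite{LPS2009} and used as a black box, so there is no in-paper argument to compare your attempt against. I can only comment on the proposal itself.

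Your reductions --- minimum counterexample, trimming outdegrees down to exactly $3$ or $2$, passing to a terminal strong component to force strong connectivity, taking a shortest cycle $C$ (hence chordless) and noting that $D':=D-V(C)$ must be acyclic --- are all sound. The gap is exactly where you place it, and it is genuine rather than cosmetic. You build a path $v_c\to p_1\to\cdots\to w$ through $D'$ ending at a sink $w$, close it via some arc $w\to v_j$ with $v_j\in V(C)$, and want the resulting cycle to be shorter than $C$. But that cycle has length $L+1+\mathrm{dist}_C(v_j,v_c)$, where $L$ is the length of the $D'$-segment; your pigeonhole on the three out-neighbours of $w$ only controls the $C$-segment and says nothing about $L$. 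A priori $D'$ could be a long directed path $p_1\to\cdots\to p_m$ whose only sink $p_m$ sits at distance $m$ from $C$; then every detour cycle through the sink is \emph{longer} than $C$, not shorter. ``Re-selecting $w$'' does not help when the sink is unique, and the unspecified ``few residual configurations'' are where the entire content of the lemma would have to live. As written this is a plan whose decisive step has not been begun.

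If you want a workable line, the argument in \cite{LPS2009} (extending Thomassen's proof for $\delta^+\ge 3$) goes differently: by contraction one shows a minimum counterexample has no $2$-cycle and that every arc is dominated, so that every in-neighbourhood induces a cycle, and the two disjoint cycles are then extracted from that structure rather than by undercutting a shortest cycle. You will likely find it easier to adapt that route --- tracking where the two low-degree vertices interfere with the contraction and domination steps --- than to rescue the shortcut argument above.
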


The results in the following claim are included in the proof of the main theorem in \cite {Thomassen1983} (see also \cite {LPS2009} )
and are not difficult to prove, here we give the sketch of the proofs for completeness.

\begin{claim}[\cite {LPS2009}]\label{claim: basic results}
Let $D$ be the minimum counterexample of Theorem \ref{thm: on 3 disjoint cycles} defined as above.
Then we have the following: (1) $D$ contains no 2-cycle; (2) each arc of $D$ is dominated by some vertex;
(3) the inneighborhood of each vertex of $D$ contains at least one cycle.
\end{claim}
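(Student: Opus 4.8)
The plan is to prove each of the three statements by a minimality argument, exploiting that $D$ is a minimum counterexample and hence every proper sub-object (fewer vertices, or fewer arcs) satisfies the conclusion of Theorem \ref{thm: on 3 disjoint cycles}, i.e.\ contains $3$ vertex-disjoint cycles (and that every vertex of $D$ has outdegree exactly $5$). For part (1), suppose $D$ contains a $2$-cycle $C$ on vertices $\{u,v\}$. Delete $C$ and set $D' = D - \{u,v\}$; the vertices of $D'$ have outdegree at least $5 - 2 = 3$, so by Lemma \ref{lemma} (indeed by a weaker hypothesis) $D'$ contains $2$ vertex-disjoint cycles, which together with $C$ give $3$ vertex-disjoint cycles in $D$, a contradiction. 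For part (2), suppose some arc $uv$ is dominated by no vertex; I would argue that the arc $uv$ can be contracted (or $v$ rerouted) to produce a digraph on fewer vertices, or with fewer arcs, that still has minimum outdegree $\geq 5$ and whose $3$ disjoint cycles lift back to $D$ — the point is that the "no dominating vertex" hypothesis is exactly what guarantees the outdegrees do not drop and that no cycle is destroyed in the lift. For part (3), fix a vertex $x$ and consider the subdigraph induced by $N^{-}(x)$, the inneighborhood of $x$; I would show that every vertex in $N^{-}(x)$ has outdegree at least $1$ \emph{inside} $N^{-}(x)$ (otherwise, using (2), one locates a short cycle through $x$ and removes its at most three vertices, applying induction or Lemma \ref{lemma} to the rest), and a digraph with all outdegrees $\geq 1$ contains a cycle.

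In more detail, for (2) the natural move is: if $uv$ has no dominating vertex, form $D^{*}$ from $D$ by deleting $v$ and adding, for each $w \in N^{+}(v)\setminus N^{+}(u)$, the arc $uw$ (redirecting $v$'s out-arcs through $u$), then deleting $u$'s original out-arcs as needed to respect minimality of arcs. Because no vertex dominated both $u$ and $v$, this operation does not create a vertex whose outdegree exceeds what is needed, and $d^{+}_{D^{*}}(u) \geq 5$ while all other outdegrees are unchanged; $D^{*}$ has fewer vertices, so it contains $3$ vertex-disjoint cycles, each of which corresponds (replacing a possible use of a new arc $uw$ by the path $u \to v \to w$) to a cycle in $D$, and these remain vertex-disjoint — a contradiction. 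For (3), the cleanest route is to suppose $N^{-}(x)$ induces a subdigraph with a vertex $y$ of in-subdigraph-outdegree $0$; then all out-neighbors of $y$ lie outside $N^{-}(x)$, and combined with $|N^{+}(y)| = 5$ and part (2) applied to the arc $yx$ one extracts a cycle of length at most, say, $3$ through $x$ and $y$, deletes its vertex set $T$, observes the remaining digraph $D - T$ has all outdegrees $\geq 5 - |T| \geq 2$ with few exceptions, and invokes Lemma \ref{lemma} to get $2$ more disjoint cycles — again a contradiction.

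The main obstacle I expect is part (3): making the cycle through $x$ short enough that deleting it leaves outdegrees high enough for Lemma \ref{lemma} to apply (it tolerates only two vertices of outdegree $2$, so we cannot afford to delete more than three vertices, and even three is tight). The trick is to use part (2) aggressively: every arc is dominated, so starting from the arc $yx$ and the fact that $y$'s out-neighbors avoid $N^{-}(x)$, one should be able to pin down a triangle or $2$-path closing up at $x$; if not, $x$ already has the desired cycle in its inneighborhood for free. I would also double-check the boundary bookkeeping in (2) — that the arc-minimality of $D$ is genuinely what forbids $uv$ from being undominated once we have performed the redirection — since that is where a careless contraction can silently create a $2$-cycle and break (1). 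The rest is routine once the right small configuration is isolated.
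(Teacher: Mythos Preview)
Your arguments for (1) and (2) are essentially those of the paper (remove the $2$-cycle and use Lemma \ref{lemma}; contract the undominated arc and use minimality), and they are fine.

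For (3), however, you are making your life much harder than necessary, and the approach you sketch does not obviously close. You try to show that every vertex of $N^{-}(x)$ has \emph{out}degree at least $1$ inside $N^{-}(x)$, and when that fails for some $y$ you propose to locate a cycle of length at most $3$ through $x$ and $y$ and delete it. But applying (2) to the arc $yx$ only produces a vertex $w$ with $w\to y$ and $w\to x$; this gives no arc out of $x$, hence no short cycle through $x$ at all, and the deletion-plus-Lemma~\ref{lemma} plan never gets off the ground. The ``tight bookkeeping'' you worry about is not the problem; the problem is that no short cycle is forced.

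The paper's proof of (3) is a one-liner once you reverse the direction: for any $u\in N^{-}(x)$, apply (2) to the arc $ux$ to obtain a dominating vertex $w$, so $w\to u$ and $w\to x$; then $w\in N^{-}(x)$ and $w\to u$. Thus every vertex of $N^{-}(x)$ has \emph{in}degree at least $1$ inside $N^{-}(x)$, and such a subdigraph contains a cycle. Switching from outdegree to indegree is the missing idea; with it, no deletion, no short-cycle hunt, and no appeal to Lemma~\ref{lemma} is needed.
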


\begin{proof}[Sketch of proof]
If there exists a 2-cycle,
then we can apply induction on the digraph obtained from $D$ by removing the 2-cycle.
If there exists an arc $xy$ not dominated by any vertex,
then we can apply induction on the digraph obtained from $D$ by removing all outgoing arcs from $x$ and then identifying $x$ and $y$.
The result (3) is a direct consequence of the result (2).
\end{proof}

Let $s$ be the strong connectivity of $D$
and let $S$ be a minimum vertex cut set of $D$.
Then $|S|=s$ and $D-S$ can be divided into two non-empty parts, say $A$ and $B$,
such that no vertex in $B$  dominates a vertex in $A$.
We show that $s\geq 3$.

If $s\leq 2$,
then, since $d^{+} _{B}(v)\geq 5-s\geq 3$ for each vertex $v\in B$,
we get that $B$ contains 2 vertex-disjoint cycles, say $C_1$ and $C_2$.
For a vertex $v\in A$,
by Claim \ref{claim: basic results} (3),
its inneighborhood $N_{D}^{-}(v)$ contains a cycle, say $C_{3}$.
Note that $N_{D}^{-}(v)\subseteq A\cup S$.
Thus $C_{1},C_{2},C_{3}$ are 3 vertex-disjoint cycles in $D$, a contradiction.

\begin{claim}\label{claim: no triangle}
The digraph $D$ contains no triangle.
\end{claim}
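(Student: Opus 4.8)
The plan is to argue by contradiction: suppose $D$ contains a triangle $T = x_1x_2x_3x_1$. The strategy is to show that the rest of the digraph, namely $D' = D - V(T)$, still has enough outdegree to find $2$ vertex-disjoint cycles, which together with $T$ would give $3$ vertex-disjoint cycles in $D$, a contradiction. Each vertex of $D$ has outdegree exactly $5$, so in $D'$ a vertex loses at most $3$ from its outdegree only if it dominates all three of $x_1,x_2,x_3$; otherwise it loses at most $2$ and still has outdegree at least $3$ in $D'$. So the first step is to control the set $W$ of vertices that dominate all of $\{x_1,x_2,x_3\}$ — these are the ``bad'' vertices, with outdegree possibly as low as $2$ in $D'$. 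If $|W|\le 2$, Lemma \ref{lemma} applies directly to $D'$ and we are done.

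So the main work is the case $|W|\ge 3$. Here I would use Claim \ref{claim: basic results}(2): every arc of $D$, in particular each of the three arcs $x_1x_2$, $x_2x_3$, $x_3x_1$ of $T$, is dominated by some vertex of $D$; and a vertex dominating an arc $x_ix_{i+1}$ necessarily lies in $W$ only if it also dominates the third vertex, so in general the dominating vertices of the triangle's arcs give us structure inside $N^-(x_i)$. More usefully, since $|W|\ge 3$ and every vertex of $W$ dominates all of $x_1,x_2,x_3$, I would look for a cycle inside $W$: by Claim \ref{claim: basic results}(3) applied to $x_1$, the inneighborhood $N^-_D(x_1)\supseteq W$ contains a cycle; but I want a cycle that avoids $T$ and avoids a second disjoint cycle. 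The cleaner route: pick a vertex $v\in W$; then $\{v\}\cup N^+_{D-V(T)}(v)$ still must be handled, but instead consider that $W\to\{x_1,x_2,x_3\}$ means any cycle $C$ contained in $D[W]$ together with $T$ and a third cycle found in $D - V(T) - V(C)$ finishes the proof — provided $D-V(T)-V(C)$ has large enough outdegree. Removing $T$ and $C$ costs a vertex at most $3 + |V(C)|$ in outdegree, which is too much in general, so this naive bound fails; this is exactly where care is needed.

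The right idea, I expect, is to \emph{not} delete $T$ outright but to reroute. Specifically, since each $x_i$ has outdegree $5$ and lies on the triangle, and since by Claim \ref{claim: basic results}(3) each $N^-_D(x_i)$ contains a cycle, one should find three vertex-disjoint cycles using one cycle through $V(T)$ (possibly $T$ itself) and two cycles built from the inneighborhoods or from $W$. Concretely: fix distinct $v_1,v_2,v_3\in W$. Each $v_j$ dominates $x_1,x_2,x_3$, so e.g. $v_1\to x_1$, giving short cycles if there is any arc back. The key sub-step is to contract or identify vertices of $T$ as in the proof of Claim \ref{claim: basic results} to reduce to a smaller digraph where induction or Lemma \ref{lemma} applies: form $D''$ by deleting the outgoing arcs of $x_2,x_3$ and identifying $x_1,x_2,x_3$ into a single vertex $x^*$; then $x^*$ has outdegree at least $3$ (it keeps the outneighbors of $x_1$ outside $T$, of which there are at least $2$, plus we can salvage more), every other vertex keeps outdegree at least $3$ unless it was in $W$, and $W$-vertices now have outdegree at least $4$ since at most one of $x_1,x_2,x_3$ (now merged) is removed from their outneighbor set. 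Then $D''$ satisfies the hypothesis of Lemma \ref{lemma} (every vertex outdegree $\ge 3$, with $x^*$ the only possible exception at outdegree exactly $3$, which is allowed), so $D''$ has $2$ vertex-disjoint cycles $C_1,C_2$; at most one of them uses $x^*$, and that one lifts to a cycle in $D$ through some $x_i$ which closes up via an arc $x_{i-1}x_i$ or $x_ix_{i+1}$ of the triangle, while $T$ supplies the third cycle on the remaining triangle vertices — or, if neither $C_1$ nor $C_2$ uses $x^*$, then $C_1,C_2,T$ are the three disjoint cycles. The main obstacle is verifying that the lifted cycle through $x^*$ can always be completed to a cycle in $D$ disjoint from the other two and from enough of $T$ to leave a triangle-cycle behind; this requires a short case analysis on which triangle arcs are available, and getting the outdegree bookkeeping for $x^*$ exactly right (ensuring it is at least $3$ after the identification) is the delicate point.
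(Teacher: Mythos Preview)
Your treatment of the case $|W|\le 2$ is correct and identical to the paper's. The gap is in the case $|W|\ge 3$.

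First, a bookkeeping slip: after identifying $x_1,x_2,x_3$ to $x^*$ (keeping only the out-arcs of $x_1$), a vertex $v\in W$ had three arcs into $T$ and now has a single arc to $x^*$, so its outdegree drops from $5$ to $3$, not to $\ge 4$ as you wrote. This is not fatal---every vertex of $D''$ still has outdegree $\ge 3$, so Thomassen's theorem (the case $k=2$ of Conjecture~\ref{conjecture: bermond-thomassen}) still gives two disjoint cycles $C_1,C_2$ in $D''$.

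The real problem is the lifting step. If $C_1$ passes through $x^*$, then in $D$ it lifts to a cycle that enters $T$ at some $x_i$ (via an arc $w\to x_i$) and leaves $T$ from $x_1$ (since only $x_1$'s out-arcs were kept). This lifted cycle uses at least $x_1$, and possibly $x_1,x_3$ or all of $x_1,x_2,x_3$. In every case the triangle $T$ is no longer available as a third cycle, and $C_2$ alone gives you only one more. There is no mechanism in your argument producing a third cycle here, and the ``short case analysis'' you allude to cannot be completed: nothing forces the remaining one or two triangle vertices to lie on a cycle disjoint from $C_1$ and $C_2$.

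The paper's argument for $|W|\ge 3$ is entirely different and uses a fact established just before this claim: the minimum counterexample $D$ has strong connectivity $s\ge 3$. Pick any three vertices $y_1,y_2,y_3\in W$. By Menger's theorem there are three internally vertex-disjoint paths from $\{x_1,x_2,x_3\}$ to $\{y_1,y_2,y_3\}$; since each $y_j$ dominates every $x_i$, one can close each path with a single arc back to its starting $x_i$, yielding three vertex-disjoint cycles. This is the missing idea: you never invoke the $3$-connectivity of $D$, and without it the case $|W|\ge 3$ does not go through.
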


\begin{proof}
Assume the opposite that $C=(x_{1},x_{2},x_{3},x_{1})$ is a triangle of $D$.
If there are at most two vertices in $D\backslash C$ that dominate $C$,
then each vertex of $D\backslash C$ has outdegree at least 3, except at most two vertices which may have outdegree 2,
by Lemma \ref{lemma},
there exist 2 vertex-disjoint cycles, say $C_{1}$ and $C_{2}$, in $D\backslash C$.
It follows that $C_{1},C_{2},C$ are 3 vertex-disjoint cycles in $D$, a contradiction.
If there are three vertices, say $y_{1},y_{2},y_{3}$, that dominate $C$,
then, since the strong connectivity of $D$ is at least 3,
by Menger's theorem,
there exist three vertex-disjoint paths from $\{x_{1},x_{2},x_{3}\}$ to $\{y_{1},y_{2},y_{3}\}$.
These three paths together with a matching of appropriate arcs from $\{y_{1},y_{2},y_{3}\}$ to $\{x_{1},x_{2},x_{3}\}$ form 3 vertex-disjoint cycles in $D$, a contradiction.
\end{proof}

\begin{claim}\label{claim: outneighborhood}
The outneighborhood of each vertex of $D$ contains no cycle.
\end{claim}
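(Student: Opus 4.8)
The plan is to argue by contradiction: suppose some vertex $v$ has $N_D^+(v)$ containing a cycle $C$. The idea is to exploit the minimality of $D$ together with the structural facts already established — no $2$-cycles (Claim \ref{claim: basic results}(1)), no triangles (Claim \ref{claim: no triangle}), every arc dominated (Claim \ref{claim: basic results}(2)), strong connectivity at least $3$, and every vertex having outdegree exactly $5$. Since $D$ has no triangle and no $2$-cycle, the cycle $C \subseteq N_D^+(v)$ has length at least $4$; write $C = (u_1, u_2, \ldots, u_\ell, u_1)$ with $\ell \geq 4$, and note $v \rightarrow u_i$ for every $i$. First I would try to produce $3$ vertex-disjoint cycles directly. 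One of them should be built from $v$ and a short piece of $C$: but $v \rightarrow u_i$ and we would need $u_i \rightarrow \cdots \rightarrow v$, which we do not have for free. Instead the natural move is to delete $v$ (and possibly a few of its out-neighbors) and apply the induction hypothesis or Lemma \ref{lemma} to the rest, then recover $v$ inside a cycle using the arcs into $v$ guaranteed by strong connectivity.

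Here is the concrete approach. Consider $D' = D - v$. Every vertex of $D'$ has outdegree at least $4$ in $D'$ except possibly those $u_i \in N_D^+(v)$, which could drop to outdegree $4$ as well (they lose only arcs, and they had $5$); in fact no vertex loses an out-arc from deleting $v$ unless it pointed \emph{to} $v$, so the only vertices with reduced outdegree are the in-neighbors of $v$, each dropping to exactly $4$. Thus $D'$ has minimum outdegree at least $4 > 3$, and by Lemma \ref{lemma} it contains $2$ vertex-disjoint cycles $C_1, C_2$. If we could always arrange $C_1, C_2$ to avoid the cycle $C$ — more precisely to avoid enough of $N_D^+(v)$ — then since $C \subseteq N_D^+(v)$ and $v$ dominates all of $C$, we would want a third cycle through $v$. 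But a cycle through $v$ needs an arc into $v$. So the refined plan is: apply Claim \ref{claim: basic results}(3) to $v$ itself — its in-neighborhood $N_D^-(v)$ contains a cycle $C_3$ — and then we need $C_1, C_2$ inside $D - V(C) - V(C_3)$, using that after removing $V(C) \cup V(C_3)$ most vertices still have large outdegree because $C$ lay entirely in one out-neighborhood. The point of Claim \ref{claim: outneighborhood} (contrasted with Claim \ref{claim: basic results}(3)) is presumably to rule out this configuration so as to force $D$ to look more like a ``layered'' or acyclic-ish structure locally, which later claims will leverage; so the proof should be reasonably short and combinatorial.

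The cleanest line I would push: let $C = (u_1, \ldots, u_\ell, u_1) \subseteq N_D^+(v)$ with $\ell$ minimum among all cycles in all out-neighborhoods, so $C$ is in particular induced (a chord would give a shorter such cycle) and $\ell \geq 4$. Now look at $u_1$: it has outdegree $5$, and $v \notin N_D^+(u_1)$ since $v \rightarrow u_1$ and there is no $2$-cycle. I would try to find, for some $u_i$, a short path from $u_i$ back toward $v$ using Claim \ref{claim: basic results}(2): the arc $vu_1$ is dominated by some vertex $w$, so $w \rightarrow v$ and $w \rightarrow u_1$; likewise the arc $u_1 u_2$ is dominated by some $w'$. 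If such dominating vertices can be chosen outside $C \cup \{v\}$ and made disjoint, then $w \rightarrow v$ plus a cycle structure around $w$ yields a cycle through $v$ disjoint from $C$, and we still have $D$ minus a bounded set with outdegree at least $3$ for Lemma \ref{lemma} to finish. The main obstacle, and where care is needed, will be the bookkeeping: ensuring the cycle through $v$, the cycle $C$, and the two Lemma \ref{lemma}-cycles can be chosen pairwise disjoint — i.e. controlling how many vertices we must delete (at most $v$, the $\le \ell$ vertices of $C$, and one or two dominating vertices) while keeping the remaining digraph within the outdegree regime where Lemma \ref{lemma} applies (outdegree $\ge 3$ except $\le 2$ vertices of outdegree $2$). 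Since each deleted vertex can reduce the outdegree of at most... well, of its in-neighbors, and we are deleting only a constant-plus-$\ell$ many vertices, the danger is that $\ell$ is large; I would handle the large-$\ell$ case separately, using that a long induced cycle $C$ in $N_D^+(v)$ forces $v$ together with $C$ to already be ``cycle-rich'' enough, or by deleting $v$ and two vertices of $C$ and appealing to the induction hypothesis rather than to Lemma \ref{lemma}.
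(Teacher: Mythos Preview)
Your plan never closes. Both lines you sketch run into the very bookkeeping obstacle you flag, and you do not resolve it. Deleting $v$ and applying Lemma~\ref{lemma} to $D-v$ gives two disjoint cycles $C_1,C_2$, but you have no control over where they sit, so there is no reason $C_1,C_2$ should avoid the cycle $C_3\subseteq N_D^-(v)$ or the cycle $C\subseteq N_D^+(v)$. Your second idea --- chasing dominating vertices of arcs like $vu_1$ and $u_1u_2$ --- is aimed at producing a short cycle through $v$, but you again have no mechanism to keep it disjoint from the two Lemma~\ref{lemma} cycles, and the ``large $\ell$'' case is not handled at all.

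The paper's argument bypasses all of this with one clean structural observation. Take $C^+\subseteq N_D^+(v)$ and an \emph{induced} cycle $C^-\subseteq N_D^-(v)$ (which exists by Claim~\ref{claim: basic results}(3)). These are automatically disjoint (a common vertex would give a $2$-cycle through $v$). Now let $M$ be the set of in-neighbors of $C^-$ lying outside $C^-$. The no-$2$-cycle and no-triangle facts force $M\cap(C^+\cup\{v\})=\emptyset$: if $u\in C^+$ dominated some $w\in C^-$ then $v\to u\to w\to v$ would be a triangle. Hence any cycle in $M$ would be disjoint from both $C^+$ and $C^-$, yielding three disjoint cycles; so $M$ is acyclic. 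Pick a source $x$ of $M$ and an out-neighbor $y\in C^-$ of $x$. The dominating vertex of the arc $xy$ must dominate $y$, hence lie in $M\cup C^-$; it cannot lie in $M$ (that would contradict $x$ being a source), so it is the unique predecessor $y^-$ of $y$ on the induced cycle $C^-$, and $y^-\to x$. Repeating this with the arc $y^-x$ gives $y^{--}\to x$, and walking all the way around $C^-$ eventually gives $y\to x$, a $2$-cycle. The point you missed is that $C^+$ and $C^-$ are already two of the three cycles you need, so the only question is whether a third disjoint cycle can be found among the in-neighbors of $C^-$ --- and ruling that out is what drives the dominating-vertex chase, applied to arcs \emph{into} $C^-$ rather than to arcs out of $v$.
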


\begin{proof}
Assume the opposite that there exists a vertex $v$ such that $N_{D}^{+}(v)$ contains a cycle, say $C^{+}$.
Recall that $N_{D}^{-}(v)$ contains at least one cycle and we can let $C^{-}$ be an induced cycle in $N_{D}^{-}(v)$.
Denote by $M$ the set of inneighbors of $C^{-}$ in $D\backslash C^{-}$.
Since $D$ contains no 2-cycle and no triangle,
we have $M\cap (C^{+}\cup \{v\})=\emptyset$.
Since $D$ contains no three vertex-disjoint cycles,
we have that $M$ is acyclic.
Let $x$ be a source of $M$ and let $y$ be an outneighbor of $x$ in $C^{-}$.
Since $xy$ has a dominating vertex and $x$ is a source of $M$,
the dominating vertex of $xy$ is the unique inneighbor $y^{-}$ of $y$ in $C^{-}$.
Now $y^{-}\rightarrow x$ and we consider the dominating vertex of the arc $y^{-}x$,
one can see that it must be the unique inneighbor $y^{--}$ of $y^{-}$ in $C^{-}$.
It follows that $y^{--}\rightarrow x$.
Consider the dominating vertex of the arc $y^{--}x$ and continue the above procedure,
we can get that $y\rightarrow x$ and $(x,y,x)$ is a 2-cycle, a contradiction.
\end{proof}

Let $D'$ be the spanning subdigraph of $D$ which consists of all arcs of the type $uv$
satisfying that $u$ is contained in some induced cycle $C_{v}$ of the subdigraph induced by $N_{D}^{-}(v)$.
For each arc $uv\in A(D')$,
in view of the successor $u^{+}$ of $u$ in $C_{v}$,
we have $N_{D}^{+}(u)\cap N_{D'}^{-}(v)\neq \emptyset$ as $u^{+}\in N_{D}^{+}(u)\cap N_{D'}^{-}(v)$.
This simple observation will be used in the following proof.

\begin{claim}\label{claim: D'}
The digraph $D'$ is $4$-regular.
\end{claim}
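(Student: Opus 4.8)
The plan is to show separately that every vertex of $D'$ has outdegree exactly $4$ and indegree exactly $4$. Recall that $D'$ is the spanning subdigraph whose arcs are precisely those $uv$ such that $u$ lies on some induced cycle inside $N_D^-(v)$; equivalently, $uv\in A(D')$ iff $v$ has an induced cycle in its inneighborhood through $u$. The two bounds will come from dual constraints: an upper bound on outdegree forced by acyclicity (no three disjoint cycles), and a lower bound on indegree forced by the fact (Claim 1(3)) that every inneighborhood contains a cycle, refined by Claims 2 and 3.

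First I would bound the out-degree in $D'$. Fix $u$ and consider $N_{D'}^+(u)$, the set of $v$ with an induced cycle through $u$ in $N_D^-(v)$. I want to show $|N_{D'}^+(u)|\le 4$; combined with the lower bound this gives equality. Suppose toward a contradiction that $u$ has five out-neighbors in $D'$, which (since $d_D^+(u)=5$ exactly) means $N_D^+(u)=N_{D'}^+(u)$. Using the observation highlighted just before the claim — for each arc $uv\in A(D')$ the successor $u^+$ of $u$ on the induced cycle $C_v\subseteq N_D^-(v)$ satisfies $u^+\in N_D^+(u)\cap N_{D'}^-(v)$ — I would try to assemble three vertex-disjoint cycles, or else derive a forbidden short cycle, exploiting Claim 4 (no out-neighborhood of a vertex contains a cycle) to constrain how these induced cycles $C_v$ can overlap inside $N_D^+(u)$. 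The idea is that the five induced cycles $C_v$ for $v\in N_D^+(u)$ all live among vertices closely tied to $u$, and the no-3-disjoint-cycles hypothesis together with the no-triangle and no-2-cycle conditions should make five simultaneously impossible, forcing $|N_{D'}^+(u)|\le 4$.

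Next I would bound the in-degree in $D'$ from below: every vertex $v$ has $|N_{D'}^-(v)|\ge 4$. By Claim 1(3), $N_D^-(v)$ contains a cycle, hence (taking a shortest one, or peeling to an induced one) an induced cycle $C$; since $D$ has no $2$-cycle and no triangle (Claims 1, 2), $|C|\ge 4$, and every vertex of $C$ is, by definition, an in-neighbor of $v$ in $D'$. This already gives $|N_{D'}^-(v)|\ge |C|\ge 4$. For the matching upper bound $|N_{D'}^-(v)|\le 4$ one argues again by acyclicity: if $v$ had five in-neighbors in $D'$, i.e. too rich a structure of induced cycles through $N_D^-(v)$, one could peel off a cycle $C$ in $N_D^-(v)$ together with two more disjoint cycles elsewhere (using that every vertex still has large out-degree off $C\cup\{v\}$, much as in the proofs of Claims 2 and 3), contradicting minimality. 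Symmetrically $|N_{D'}^+(u)|\ge 4$: since $u$ lies on an induced cycle in its own successor's... — more directly, a counting/double-count over $A(D')$ of $\sum_u d_{D'}^+(u)=\sum_v d_{D'}^-(v)$ forces all degrees to the common value $4$ once both one-sided inequalities ($d^+\le 4$, $d^-\ge 4$) are in hand.

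The main obstacle I expect is the out-degree upper bound $|N_{D'}^+(u)|\le 4$: showing that five induced cycles $C_v\subseteq N_D^-(v)$, one for each of the five out-neighbors $v$ of $u$, cannot coexist without producing either three vertex-disjoint cycles or a $2$-cycle/triangle. This is where Claim 4 must do real work — controlling the intersection pattern of the $C_v$'s with $N_D^+(u)$ — and it is the step that replaces the long case analysis of \cite{LPS2009}, so I would budget most of the effort there.
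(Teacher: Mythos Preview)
Your overall architecture matches the paper's: prove $d_{D'}^-(v)\ge 4$ (an induced cycle in $N_D^-(v)$ has length $\ge 4$ since there are no $2$-cycles or triangles), prove $d_{D'}^+(u)\le 4$, and then use $\sum d_{D'}^+=\sum d_{D'}^-$ to force equality everywhere. The lower bound and the double count are exactly right; your separate attempt at $d_{D'}^-\le 4$ via ``peeling off two more disjoint cycles'' is neither needed nor obviously workable, but you correctly note that the handshake identity makes it superfluous.

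Where you go astray is in budgeting ``most of the effort'' for $d_{D'}^+(u)\le 4$ and planning to analyse how the five induced cycles $C_v$ overlap. The step is a one-liner, and you have already written down both ingredients. Assume $d_{D'}^+(u)=5$, so $N_{D'}^+(u)=N_D^+(u)$. Your observation says that for each $v\in N_{D'}^+(u)$ the successor $u^+$ of $u$ on $C_v$ lies in $N_D^+(u)$ and dominates $v$; hence \emph{every} vertex of the $5$-element set $N_D^+(u)$ has an inneighbor inside $N_D^+(u)$. A finite digraph in which every vertex has positive indegree contains a cycle, so $N_D^+(u)$ contains a cycle, contradicting Claim~\ref{claim: outneighborhood} directly. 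There is no intersection pattern to track and no three disjoint cycles to assemble by hand; Claim~\ref{claim: outneighborhood} already encapsulates that work.
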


\begin{proof}
Since $D$ has no triangle and $N_{D}^{-}(v)$ contains a cycle for each vertex $v$,
we have $d_{D'}^{-}(v)\geq 4$.
It suffices to show that $d_{D'}^{+}(v)\leq 4$.
Assume the opposite that there exists a vertex $u$ with $d_{D'}^{+}(u)=5=d_{D}^{+}(u)$.
Then clearly $N_{D'}^{+}(u)=N_{D}^{+}(u)$.
Note that $N_{D}^{+}(u)\cap N_{D'}^{-}(v)\neq \emptyset$ for each arc $uv\in A(D')$.
It therefore follows that every vertex in $N_{D}^{+}(u)$ has an inneighbor in $N_{D}^{+}(u)$
and thus $N_{D}^{+}(u)$ contains a cycle, a contradiction to Claim \ref{claim: outneighborhood}.
\end{proof}

\begin{claim}\label{claim: arcs in D'}
An arc $uv\in A(D')$ if and only if $uv\in A(D)$ and $N_{D}^{+}(u)\cap N_{D'}^{-}(v)\neq \emptyset$.
\end{claim}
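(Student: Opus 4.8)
The plan is to prove the two implications separately; the forward one is essentially the observation recorded just before the claim, so the content lies in the converse.

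\emph{Forward implication.} Suppose $uv\in A(D')$. Since $D'$ is a spanning subdigraph of $D$ we have $uv\in A(D)$, and by definition there is an induced cycle $C_{v}$ of $D[N_{D}^{-}(v)]$ through $u$. Let $u^{+}$ be the successor of $u$ on $C_{v}$. Then $u^{+}\in N_{D}^{+}(u)$, and since $u^{+}$ also lies on the induced cycle $C_{v}$ of $D[N_{D}^{-}(v)]$ we get $u^{+}v\in A(D')$, i.e. $u^{+}\in N_{D'}^{-}(v)$; hence $N_{D}^{+}(u)\cap N_{D'}^{-}(v)\neq\emptyset$.

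\emph{Converse.} Assume $uv\in A(D)$ and pick $w\in N_{D}^{+}(u)\cap N_{D'}^{-}(v)$. Since $w\in N_{D'}^{-}(v)$, the vertex $w$ lies on some induced cycle $C$ of $D[N_{D}^{-}(v)]$. Because $D$ has no $2$-cycle (Claim \ref{claim: basic results} (1)) and no triangle (Claim \ref{claim: no triangle}), every induced cycle of $D[N_{D}^{-}(v)]$ has length at least $4$; on the other hand $N_{D'}^{-}(v)$ is exactly the union of the vertex sets of all these induced cycles, and $|N_{D'}^{-}(v)|=d_{D'}^{-}(v)=4$ by Claim \ref{claim: D'}. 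Hence $C$ is the unique induced cycle of $D[N_{D}^{-}(v)]$, it has length exactly $4$, and $N_{D'}^{-}(v)=V(C)$. Write $C=w_{1}w_{2}w_{3}w_{4}w_{1}$ with $w=w_{1}$. Since $uv\in A(D)$ we have $u\in N_{D}^{-}(v)$, together with $u\rightarrow w_{1}$. It suffices to show $u\in V(C)$ (equivalently $u=w_{4}$, the in-neighbour of $w_{1}$ on $C$), for then $uv\in A(D')$ by definition.

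Suppose for contradiction that $u\notin V(C)$. All arcs of $D$ between $u$ and $\{w_{1},w_{2},w_{3},w_{4}\}$ lie inside $D[N_{D}^{-}(v)]$, so I would now run through the possible such arcs: any additional arc incident with $u$ combines with a sub-path of $C$ to produce either a $2$-cycle or a triangle (both forbidden) or a chordless $4$-cycle on a vertex set of the form $(\{w_{1},w_{2},w_{3},w_{4}\}\setminus\{w_{i}\})\cup\{u\}$, i.e. a second induced cycle of $D[N_{D}^{-}(v)]$, contradicting the uniqueness just established. The one configuration left open by this short check — and the main obstacle — is that the only arcs joining $u$ to $C$ are $u\rightarrow w_{1}$ and $w_{4}\rightarrow u$; to rule it out I would step outside $N_{D}^{-}(v)$, using a dominating vertex of the arc $w_{4}\rightarrow u$ (which exists by Claim \ref{claim: basic results} (2)) together with Claim \ref{claim: outneighborhood} and Claim \ref{claim: basic results} (3), and again force a forbidden short cycle or a second induced cycle in some $D[N_{D}^{-}(x)]$. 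This gives the contradiction, so $u\in V(C)$ and $uv\in A(D')$.
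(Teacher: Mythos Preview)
Your forward implication is fine and matches the paper. The converse, however, has a genuine gap: you correctly isolate the ``hard case'' where the only arcs between $u$ and the $4$-cycle $C=N_{D'}^{-}(v)$ are $u\to w_{1}$ and $w_{4}\to u$, but you do not actually rule it out. The sentence ``to rule it out I would step outside $N_{D}^{-}(v)$, using a dominating vertex of the arc $w_{4}\to u$ \ldots'' is a plan, not an argument, and it is not clear how to make it work. A dominating vertex $z$ of $w_{4}u$ gives $z\to w_{4}$ and $z\to u$, but nothing forces $z$ into $N_{D}^{-}(v)$ or into any configuration that produces a forbidden short cycle or a second induced cycle in some $N_{D}^{-}(x)$. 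Even your earlier case analysis (``any additional arc \ldots\ produces a $2$-cycle, a triangle, or a chordless $4$-cycle'') needs care: for instance, if $w_{3}\to u$ \emph{and} $u\to w_{2}$, you must first pass through the triangle $uw_{2}w_{3}u$ before the $4$-cycle argument applies.

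The paper's converse is both different and much shorter, and it works by looking at $N_{D}^{+}(u)$ rather than $N_{D}^{-}(v)$. Suppose $uv\in A(D)$, $N_{D}^{+}(u)\cap N_{D'}^{-}(v)\neq\emptyset$, but $uv\notin A(D')$. Write $N_{D}^{+}(u)=\{v,v_{1},v_{2},v_{3},v_{4}\}$. Since $D'$ is $4$-regular (Claim~\ref{claim: D'}) and $uv\notin A(D')$, all four arcs $uv_{1},\ldots,uv_{4}$ lie in $D'$; by the forward implication each $v_{i}$ therefore has an in-neighbour in $N_{D}^{+}(u)$, and $v$ has one too by hypothesis. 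Thus every vertex of $N_{D}^{+}(u)$ has an in-neighbour inside $N_{D}^{+}(u)$, so $N_{D}^{+}(u)$ contains a cycle, contradicting Claim~\ref{claim: outneighborhood}. The key move you are missing is to exploit the $4$-out-regularity of $D'$ at $u$: the single ``missing'' $D'$-arc from $u$ is exactly $uv$, which immediately puts all of $N_{D}^{+}(u)$ to work.
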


\begin{proof}
By the observation before Claim \ref{claim: D'},
it suffices to show that for every $uv\in A(D)$ if $N_{D}^{+}(u)\cap N_{D'}^{-}(v)\neq \emptyset$ then $uv\in A(D')$.
Assume the opposite that there exists an arc $uv\notin A(D')$ with $N_{D}^{+}(u)\cap N_{D'}^{-}(v)\neq \emptyset$.
Note that $|N_{D}^{+}(u)|=5$ and we can let $N_{D}^{+}(u)=\{v,v_{1},v_{2},v_{3},v_{4}\}$.
Since $D'$ is 4-regular and $uv\notin A(D')$,
then for each $i\in \{1,\ldots,4\}$
we have $uv_{i}\in A(D')$ and $N_{D}^{+}(u)\cap N_{D'}^{-}(v_{i})\neq \emptyset$, i.e., $v_{i}$ has an inneighbor in $N_{D}^{+}(u)$.
Now every vertex in $N_{D}^{+}(u)$ has an inneighbor in $N_{D}^{+}(u)$ and thus $N_{D}^{+}(u)$ contains a cycle, a contradiction to Claim \ref{claim: outneighborhood}.
\end{proof}

\begin{claim}\label{claim: no 4-cycle}
The digraph $D'$ contains no $4$-cycle.
\end{claim}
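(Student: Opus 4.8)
Suppose, for contradiction, that $D'$ contains a $4$-cycle $C'=(u_1,u_2,u_3,u_4,u_1)$; since $A(D')\subseteq A(D)$ this is also a $4$-cycle of $D$ (indices read modulo $4$). The first step is to extract the rigidity hidden in the $4$-regularity of $D'$. For every vertex $v$, the digraph $D[N^-_D(v)]$ contains an induced cycle $K_v$ (by Claim \ref{claim: basic results}(3) it contains a cycle, hence an induced one); this cycle has length at least $4$ because $D$ has no $2$-cycle and no triangle (Claims \ref{claim: basic results}(1) and \ref{claim: no triangle}); each of its vertices lies on an induced cycle of $D[N^-_D(v)]$, so $V(K_v)\subseteq N^-_{D'}(v)$ by the definition of $D'$; and $|N^-_{D'}(v)|=4$ by Claim \ref{claim: D'}. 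Hence $K_v$ is a directed $4$-cycle and $V(K_v)=N^-_{D'}(v)$. Together with Claim \ref{claim: arcs in D'} this gives the tool I will use throughout: \emph{whenever $w\to u$ in $D$ and $w$ dominates at least one vertex of $V(K_u)$, we have $w\in V(K_u)$.}

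Next I would aim for three vertex-disjoint cycles in $D$ — two of them inside $D-V(C')$, plus $C'$ — which contradicts the choice of $D$. For $v\notin V(C')$ we have $d^+_{D-V(C')}(v)=5-|N^+_D(v)\cap V(C')|$, and no vertex dominates all four vertices of $C'$ (otherwise $C'$ would be a cycle inside its out-neighbourhood, contradicting Claim \ref{claim: outneighborhood}); so $d^+_{D-V(C')}(v)\ge 2$, with equality exactly when $v$ dominates exactly three vertices of $C'$. Let $Z$ be the set of such vertices. If $|Z|\le 2$, then Lemma \ref{lemma} applies to $D-V(C')$ and supplies the two disjoint cycles. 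So the entire task reduces to ruling out $|Z|\ge 3$.

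This last point is the crux and the part I expect to take real work. Assume $|Z|\ge 3$; for $z\in Z$ let $t(z)$ be the index of the vertex of $C'$ missed by $z$. Among any three members of $Z$, two — say $z_1,z_2$ — satisfy $t(z_1)-t(z_2)\not\equiv 2\pmod 4$ (if all three pairwise differences were $\equiv 2$, summing two of them would force $t(z_1)=t(z_3)$, contradicting the third). For such a pair the set $\{t(z_1),t(z_2),t(z_1)+1,t(z_2)+1\}$ omits some residue $m\pmod 4$, so $z_1$ and $z_2$ both dominate $u_m$ and $u_{m-1}$. Since $u_{m-1}\in V(K_{u_m})$, the tool above gives $z_1,z_2\in V(K_{u_m})$; as $z_1\ne z_2$ and $z_1,z_2\notin V(C')$, the four-element set $V(K_{u_m})$ equals $\{u_{m-1},z_1,z_2,\rho\}$ for some fourth vertex $\rho$, all four of which dominate $u_m$, and moreover $z_1\to u_{m-1}$ and $z_2\to u_{m-1}$. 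On the directed $4$-cycle $K_{u_m}$ the successor of $u_{m-1}$ cannot be $z_1$ or $z_2$ (since $z_1,z_2\to u_{m-1}$, this would create a $2$-cycle), so it must be $\rho$; hence $K_{u_m}$ reads $u_{m-1}\to\rho\to z_i\to z_j\to u_{m-1}$ with $\{i,j\}=\{1,2\}$, and then $u_{m-1}\to\rho$, $\rho\to z_i$, $z_i\to u_{m-1}$ is a triangle of $D$, contradicting Claim \ref{claim: no triangle}. Therefore $|Z|\le 2$, which completes the argument.

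To summarise where the difficulty lies: the reduction to Lemma \ref{lemma} through $D-V(C')$ (using Claim \ref{claim: outneighborhood}) is routine, but the heart of the proof is first squeezing out the fact that each in-neighbourhood is a directed $4$-cycle with $V(K_v)=N^-_{D'}(v)$, and then observing that the modular slack in the choice of $m$ always lets one trap two elements of $Z$ on a common $K_{u_m}$ in a position where the orientation of that $4$-cycle is forced and collapses to a $2$-cycle or a triangle.
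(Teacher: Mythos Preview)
Your argument is correct. The setup coincides with the paper's: both first observe that $N^-_{D'}(v)$ induces a directed $4$-cycle on exactly the four vertices of $N^-_{D'}(v)$, and both reduce via Lemma~\ref{lemma} and Claim~\ref{claim: outneighborhood} to the situation where at least three outside vertices each dominate exactly three vertices of the $4$-cycle. The endgames differ. The paper uses pigeonhole to find a cycle vertex $a$ dominated by all three, notes there are no arcs from $C$ back into $\{x,y,z\}$, forces one of $xa,ya,za$ out of $D'$ via Claim~\ref{claim: arcs in D'}, and then runs a short case split on whether $\{y,z\}\to d$, reaching in each case three vertices of some $N^-_{D'}(\cdot)$ with a common target, contradicting the $4$-cycle structure. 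You instead select just two of the three vertices whose missed indices are not opposite modulo $4$, locate two \emph{consecutive} cycle vertices $u_{m-1},u_m$ they both dominate, apply your ``tool'' (i.e.\ Claim~\ref{claim: arcs in D'}) once to place both inside $K_{u_m}$, and then read off a triangle from the forced orientation of that $4$-cycle. Your route is a bit more symmetric and dispenses with both the case analysis and the auxiliary observation that $C$ sends no arcs into $Z$; the paper's route is more concrete but of comparable length.
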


\begin{proof}
Assume the opposite that $C=(a,b,c,d,a)$ is a 4-cycle of $D'$.
By Lemma \ref{lemma} and Claim \ref{claim: outneighborhood},
there exist three vertices, say $x,y,z$, outside the cycle $C$ such that each vertex has exactly three outneighbors in $C$.
One can see that no vertex in $C$ has an outneighbor in $\{x,y,z\}$;
since otherwise either a 2-cycle or a triangle will appear.
One can also see that one vertex in $C$ is dominated by $\{x,y,z\}$.
Assume w.l.o.g. that $a$ is such a vertex.
By the definition of $D'$ and Claim \ref{claim: no triangle},
we have that the inneighborhood $N_{D'}^{-}(v)$ of each vertex $v$ of $D'$ induces a 4-cycle in $D$.
Note that $\{d,x,y,z\}$ forms no 4-cycle as there is no arc from $d$ to $\{x,y,z\}$.
It therefore follows that at least one arc in $\{xa,ya,za\}$ is not in $D'$
and assume w.l.o.g. that $xa\notin A(D')$.
Then, by Claim \ref{claim: arcs in D'},
we have $N_{D}^{+}(x)\cap N_{D'}^{-}(a)=\emptyset$
and thus $xd\notin A(D),~x\rightarrow \{a,b,c\}$, $xb, xc\in A(D')$ .

Now we claim that $\{y,z\}\rightarrow d$.
If not, then there exists one vertex in $\{y,z\}$ which dominates $\{b,c\}$
and assume w.l.o.g. that $y\rightarrow \{b,c\}$.
It follows that $yb, yc\in A(D')$
as $N^{+}_{D}(y)\cap N^{-}_{D'}(b)\neq \emptyset$, $N^{+}_{D}(y)\cap N^{-}_{D'}(c)\neq \emptyset$.
Now $a,x,y\in N_{D'}^{-}(b)$, $\{x,y\}\rightarrow a$,
a contradiction to the fact that $N_{D'}^{-}(b)$ induces a 4-cycle in $D$.
So $\{y,z\}\rightarrow d$.
It follows that $ya,za\in A(D')$
as $N^{+}_{D}(y)\cap N^{-}_{D'}(a)\neq \emptyset$, $N^{+}_{D}(z)\cap N^{-}_{D'}(a)\neq \emptyset$.
Now $d,y,z\in N_{D'}^{-}(a)$, $\{y,z\}\rightarrow d$,
a contradiction to the fact that $N_{D'}^{-}(a)$ induces a 4-cycle in $D$.
\end{proof}

Recall that $N_{D'}^{-}(v)$ induces a 4-cycle in $D$ for each vertex $v$ of $D'$.
Clearly, $D$ has a 4-cycle.
To finish the whole proof,
we show that $D$ has a 4-cycle which contains two consecutive arcs of $D'$.
Consider an induced 4-cycle $C^{x}=(x_{1},x_{2},x_{3},x_{4},x_{1})$ in $D$,
since $D'$ contains no 4-cycle,
there exists an arc not in $D'$ and assume w.l.o.g. that $x_{4}x_{1}$ is not in $D'$.
It follows that there exists another 4-cycle $C^{y}=(y_{1},y_{2},y_{3},y_{4},y_{1})$ induced by $N_{D'}^{-}(x_{1})$ and $C^{x}\cap C^{y}=\emptyset$.
Similarly, since $D'$ contains no 4-cycle,
we can assume w.l.o.g. that $y_{4}y_{1}$ is not in $D'$
and there exists another 4-cycle $C^{z}=(z_{1},z_{2},z_{3},z_{4},z_{1})$ induced by $N_{D'}^{-}(y_{1})$ and $C^{y}\cap C^{z}=\emptyset$.
Note that $D$ contains no three vertex-disjoint cycles.
Therefore, $C^{x}\cap C^{z}\neq \emptyset$.
If $x_{1}\in C^{x}\cap C^{z}$, then a 2-cycle will appear.
If $x_{2}\in C^{x}\cap C^{z}$, then a triangle will appear.
If $x_{4}\in C^{x}\cap C^{z}$, then since $x_{4}\rightarrow y_{1}$ and $y_{1}\rightarrow x_{1}$ we have $x_{4}x_{1}\in A(D')$, a contradiction.
Thus, $x_{3}\in C^{x}\cap C^{z}$ and $x_{3}y_{1}\in A(D')$.
Now the cycle $(x_{1},x_{2},x_{3},y_{1},x_{1})$ is a 4-cycle with two consecutive arcs $x_{3}y_{1},y_{1}x_{1}$ of $D'$.

We are now ready to finish the proof by getting a contradiction to Claim \ref{claim: no 4-cycle} that $D'$ contains a 4-cycle.
Let $C^{x'}=(x'_{1},x'_{2},x'_{3},x'_{4},x'_{1})$ be a 4-cycle in $D$ with two consecutive arcs of $D'$.
We can assume w.l.o.g. that $x'_{4}x'_{1}\notin A(D')$ and $x'_{1}x'_{2}, x'_{2}x'_{3}\in A(D')$.
Similar to the above analysis, we can get two 4-cycles $C^{y'}$ and $C^{z'}$ which are induced by the inneighborhood of $x'_{1}$ and $y'_{1}$ in $D'$, respectively.
Moreover, we can get that $C^{z'}\cap C^{x'}=x'_{3}$.
It therefore follows that $x'_{3}y'_{1}\in A(D')$ and now $(x'_{1},x'_{2},x'_{3},y'_{1},x'_{1})$ is a 4-cycle of $D'$, a contradiction.

\section{Proofs of Theorem \ref{thm: at most k-t cycles}, Corollary \ref{corollary: no constant c} and Theorem \ref{thm: even girth case}}

\begin{proof}[\bf{Proof of Theorem \ref{thm: at most k-t cycles}}]
We first consider the case that $g\geq 4$ is even.
Let $h=\lceil \frac{g}{g-1}k\rceil$ and $n=(\frac{g}{2}-1)\lceil \frac{g}{g-1}k\rceil+1$.
Note that
\begin{equation}\label{equation: n}
\begin{split}
n
&=\left(\frac{g}{2}-1\right)\left\lceil \frac{g}{g-1}k\right\rceil+1\\
&\leq \left(\frac{g-1}{2}-\frac{1}{2}\right)\left(\frac{g}{g-1}k+1\right)+1\\
&=\frac{gk}{2}+\frac{g-1}{2}-\frac{gk}{2(g-1)}-\frac{1}{2}+1\\
&=\frac{g}{2}\left(k+1-\frac{k}{g-1}\right).
\end{split}
\end{equation}
We construct a bipartite digraph $D=(X,Y;A)$
with $X=\{x_{0},x_{1},\ldots,x_{n-1}\}$
and $Y=Y_{0}\cup Y_{1}\cup \cdots \cup Y_{n-1}$,
where $Y_{i}$ consists of $\lceil \frac{g}{g-1}k\rceil$ independent vertices.
Let $x_{i}\rightarrow Y_{i}$ and $Y_{i}\rightarrow \{x_{i+1},x_{i+2},\ldots,x_{i+h}\}$ for each $0\leq i\leq n-1$,
where indices are taken modulo $n$.
One can see that each vertex of $D$ has outdegree $h$.
Recall that $g$ is even.
Since each cycle will use at least $\lceil n/h\rceil=\lceil \frac{g}{2}-1+\frac{1}{h}\rceil=g/2$ vertices of $X$ and $|X|=n$,
by Inequality (\ref{equation: n}),
there exist at most $k-t$ vertex-disjoint cycles in $D$ if we choose $k\geq (t+1)(g-1)$.
Now it suffices to show that the girth of $D$ is $g$.

Consider the digraph $D_{x}$ with
$V(D_{x})=\{x_{0},x_{1},\ldots,x_{n-1}\}$
and
$$
A(D_{x})=\{x_{i}x_{j}:~i+1\leq j\leq i+h,~0\leq i\leq n-1\}.
$$
It is not difficult to see that $D_{x}$ has a cycle of length less than $g/2$ if and only if $D$ has a cycle of length less than $g$.
Let $C_{x}$ be a shortest cycle in $D_{x}$.
Note that $C_{x}$ must consist of an arc $x^{-}_{i}x_{i}$ with $x^{-}_{i}\in N_{D_{x}}^{-}(x_{i})$ and a shortest path from $x_{i}$ to $x^{-}_{i}$ for some vertex $x_{i}$.
By symmetry, we can let $x_{i}=x_{0}$.
Recall that $N_{D_{x}}^{-}(x_{0})=\{x_{n-1},x_{n-2},\ldots,x_{n-h}\}$.
It then follows from the definition of $A(D_{x})$ that the shortest path $P$ from $x_{0}$ to $N_{D_{x}}^{-}(x_{0})$ has length at least $\lceil\frac{n-h}{h}\rceil=g/2-1$.
So $|A(C_{x})|=|A(P)|+1\geq g/2$ and the girth of $D$ is at least $g$.
Let $y_{i}$ be a vertex in $Y_{i}$.
Now
$$
(x_{0},y_{0},x_{h},y_{h},x_{2h},y_{2h},\ldots ,x_{(g/2-1)h},y_{(g/2-1)h},x_{0})
$$
is a cycle of length $g$ in $D$.
So the girth of $D$ is $g$ and the proof is complete.

Now we consider the case that $g\geq 3$ is odd.
Let $g=2r+1$ and $r\geq 1$.
Let $h=\lceil \frac{g}{g-1}k\rceil$ and $n=\frac{g-1}{2}\lceil \frac{g}{g-1}k\rceil+1=rh+1$.
We construct a digraph $D'$ with $V(D')=X'\cup Y'$,
where $X'=\{x'_{0},x'_{1},\ldots,x'_{n-1}\}$, 
$Y'=Y'_{0}\cup Y'_{1}\cup \cdots \cup Y'_{n-1}$
and $Y'_{i}$ consists of $\lceil \frac{g}{g-1}k\rceil$ independent vertices.
Let $x'_{i}\rightarrow Y'_{i}$ and $Y'_{i}\rightarrow \{x'_{i+1},x'_{i+2},\ldots,x'_{i+h}\}$ for each $0\leq i\leq n-1$,
where indices are taken modulo $n$.
One can see that each vertex of $D'$ has outdegree $h$ with only one exception that $x'_{rh}$ has outdegree $h+1$.
Denote by $D''$ the digraph obtained from $D'$ by removing the arc $x'_{rh}x'_{0}$.
Similar to the above analysis, 
we can get that the girth of $D''$ is $g+1$;
and moreover, for any cycle containing $x'_{rh}x'_{0}$ in $D'$,
since the shortest path from $x'_{0}$ to $x'_{rh}$ in $D'$ has length at least $2r$,
we can get that the girth of $D'$ is at least $2r+1$.
Let $y'_{i}$ be a vertex in $Y'_{i}$.
Now
$$
(x'_{0},y'_{0},x'_{h},y'_{h},x'_{2h},y'_{2h},\ldots ,x'_{rh},x'_{0})
$$
is a cycle of length $2r+1$ in $D'$.
So the girth of $D'$ is $g=2r+1$. 
Denote by $p'$ and $p''$ the maximum numbers of vertex-disjoint cycles in $D'$ and in $D''$, respectively.
It is not difficult to see that $p'-p''\leq 1$. 
Note that each cycle in $D''$ uses at least $\frac{g+1}{2}$ vertices in $X$ and 
$$
|X|=n=\frac{g-1}{2}\left\lceil \frac{g}{g-1}k\right\rceil+1\leq \frac{g}{2}k+\frac{g-1}{2}+1=\frac{g+1}{2}\left(k+1-\frac{k}{g+1}\right).
$$
So there exist at most $k-t-1$ vertex-disjoint cycles in $D''$ if we choose $k\geq (t+2)(g+1)$.
It follows that $D'$ has at most $k-t$ vertex-disjoint cycles if we choose $k\geq (t+2)(g+1)$.

Here note that we can construct a finite number of counterexamples to Conjecture \ref{conjecture: bang-jensen et al.},
which can be obtained from $D$ (or $D'$) by adding an arbitrary number of vertices $S$ such that each vertex in $S$ has exactly $\lceil\frac{g-1}{g}k\rceil$ outneighbors in $D$ and no inneighbors.
\end{proof}

\begin{proof}[\bf{Proof of Corollary \ref{corollary: no constant c}}]
We only show the even girth case.
For the odd girth case of the result,
one can deduce by similar arguments.
We consider a bipartite digraph $D=(X,Y;A)$ with
$X=\{x_{0},x_{1},\ldots,x_{n-1}\}$
and $Y=Y_{0}\cup Y_{1}\cup \cdots \cup Y_{n-1}$,
where each $Y_{i}$ consists of $\lceil \frac{g}{g-1}k\rceil+c$ independent vertices.
Let
$$
n=\left(\frac{g}{2}-1\right)\left(\left\lceil \frac{g}{g-1}k\right\rceil+c\right)+1,~~~h=\left\lceil \frac{g}{g-1}k\right\rceil+c.
$$
Let $x_{i}\rightarrow Y_{i}$ and $Y_{i}\rightarrow \{x_{i+1},x_{i+2},\ldots,x_{i+h}\}$ for each $0\leq i\leq n-1$,
where indices are taken modulo $n$.
Similar to the proof of Theorem \ref{thm: at most k-t cycles},
we can show that the girth of $D$ is $g$
and thus each cycle of $D$ uses at least $g/2$ vertices in $X$.
Since
\begin{equation}
\begin{split}
n
&=\left(\frac{g}{2}-1\right)\left(\left\lceil \frac{g}{g-1}k\right\rceil+c\right)+1\\
&\leq \left(\frac{g-1}{2}-\frac{1}{2}\right)\left(\frac{g}{g-1}k+c+1\right)+1\\
&=\frac{gk}{2}+\frac{(g-1)c}{2}+\frac{g-1}{2}-\frac{gk}{2(g-1)}-\frac{c}{2}-\frac{1}{2}+1\\
&=\frac{g}{2}\left(k+\frac{gc+g-2c}{g}-\frac{k}{g-1}\right),
\end{split}
\end{equation}
there exist at most $k-1$ vertex-disjoint cycles in $D$ if we choose
$$
k> \frac{(g-1)(gc+g-2c)}{g}.
$$
This completes the proof.
\end{proof}

\begin{proof}[\bf{Proof of Theorem \ref{thm: even girth case}}]
Let $g\geq 4$ be an arbitrary even integer.
Consider the digraph $D=(X,Y;A)$ constructed in the proof of Theorem \ref{thm: at most k-t cycles},
note that $D$ has girth $g$, minimum outdegree $h=\lceil \frac{g}{g-1}k\rceil$ and $|X|=n=(g/2-1)h+1$.
According to Conjecture \ref{conj: path},
if it is true, then there exists a path of length $h(g-1)$.
However, it is not difficult to see that the longest path of $D$ has length at most $2n-1$,
which implies a contradiction for any $k\geq 1$,
this is due to the fact that $k\geq 1$ implies $h\geq 2$ and
$$
2n-1=2\left(\left(\frac{g}{2}-1\right)h+1\right)-1=(g-2)h+1<(g-1)h.
$$
It therefore follows that Conjecture \ref{conj: path} does not hold for any even girth $g\geq 4$ and $l(h,g)\leq h(g-2)+1$ for even $g\geq 4$.
\end{proof}

\section{Acknowledgements}

The authors would like to thank Prof. Hongliang Lu for fruitful discussions on constructing the counterexamples of the even girth case of Conjecture \ref{conjecture: bang-jensen et al.},
and thank Prof. Guanghui Wang and the graduate student Donglei Yang for pointing out that
the digraphs we constructed above in fact can disprove the even girth case of Conjecture \ref{conj: path}.

\end{spacing}
\end{document}